\documentclass[11pt]{amsart}
\usepackage[utf8]{inputenc}
\usepackage[margin=1.15in]{geometry}
\usepackage{amsthm, amsmath, amsfonts, amssymb, array, bm, bbm, caption, commath, enumitem, graphics, graphicx, hyperref, ifthen, latexsym, mathtools, pgf, pgfplots, setspace, subcaption, tikz, tkz-euclide, wrapfig, xcolor}
\pgfplotsset{compat=1.18}
\usepackage[textsize=small]{todonotes}
\usepackage[alphabetic]{amsrefs}
\title{Thin surface subgroups of non-uniform arithmetic lattices in \texorpdfstring{$\operatorname{SO}^+(\lowercase{n},1)$}{SO+(n,1)}}
\author{Sara Edelman-Mu\~noz}
\address[S. Edelman-Mu\~noz]{Durham Veterans Affairs HCS, HSR (152), 411 W. Chapel Hill St, Durham 27701}
\email{\href{maiilto:sedelmanmunoz@gmail.com}{sedelmanmunoz@gmail.com}}
\author{Michael Zshornack}
\address[M. Zshornack]{Northwestern University, 2033 Sheridan Road, Evanston, IL 60208, United States}
\email{\href{mailto:zshornack@northwestern.edu}{zshornack@northwestern.edu}}

\date{\today}

\newtheorem{theorem}{Theorem}
\newtheorem{lemma}[theorem]{Lemma}
\newtheorem{corollary}[theorem]{Corollary}

\theoremstyle{definition}
\newtheorem{definition}{Definition}

\newcommand{\SOqZ}{\textnormal{SO}^+(q,\mathbb{Z})}
\newcommand{\SOfZ}{\textnormal{SO}^+(f,\mathbb{Z})}
\newcommand{\legendre}[2][p]{\ensuremath{\left( \frac{#2}{#1} \right) }}
\newcommand{\fold}{\ensuremath{FM}}
\newcommand{\h}{\mathbb{H}}

\newcolumntype{C}{>{{}}c<{{}}}

\usetikzlibrary{calc}
\pgfkeys{/pgf/declare function={arccosh(\x) = ln(\x + sqrt(\x^2-1));}}

\tikzset{->-/.style={decoration={\
            markings,
            mark=at position #1 with {\arrow{>}}},postaction={decorate}}}

\tikzset{arrowMe/.style={postaction=decorate,
      decoration={markings, mark=at position 0.8 with {\arrow[thick]{#1}}
      }}}

\newcommand{\arcThroughThreePoints}[4][]{
            \tkzCircumCenter(#2,#3,#4)\tkzGetPoint{c}
            \tkzDrawArc[#1](c,#2)(#4)
      }

\begin{document}

\begin{abstract}
    We show that the fundamental groups of all non-compact, arithmetic, hyperbolic, $n$-manifolds for $n \geq 4$ contain thin surface subgroups. As a consequence of the proof of this theorem we also show that the fundamental groups of the doubles of cusped, arithmetic, hyperbolic $n$-manifolds embed as GFERF subgroups of $\operatorname{SO}^+(n+1,1)$.
\end{abstract}

\maketitle

\section{Introduction}
\label{ch:Intro}

Let $n\geq 3$ and $\Gamma\leqslant\operatorname{SO}^+(n,1)$, be a lattice. A \emph{surface subgroup} of $\Gamma$ is a subgroup isomorphic to the fundamental group of a closed surface of genus $g\geq 2$. Understanding the existence and geometric features of surface subgroups of $\Gamma$ has been of interest to topologists and geometers for many years. This is, in part, due to the relationship between such subgroups and incompressible surfaces in the ambient hyperbolic manifold. When such a subgroup exists, it is always infinite-index in $\Gamma$ and can often be made Zariski-dense in $\operatorname{SO}^+(n,1)$. When $\Gamma$ is arithmetic, such a subgroup is said to be \emph{thin} in the sense of \cite{Sarnak}.

Thin subgroups of arithmetic groups have attracted independent interest in recent years within number theory and combinatorics as these subgroups share a number of useful properties with the arithmetic group they live in. For instance, such subgroups satisfy forms of \emph{superstrong approximation}: they surject onto almost all congruence quotients of the lattice and their induced Cayley graphs form expander families (see \cite{SGV}). Despite the rich emerging theory behind the properties of thin groups, it is not particularly well understood how such examples arise. Though thin \emph{free} subgroups are abundant in many contexts, (c.f., \cite{fuchs}), methods from hyperbolic geometry have proven especially useful in constructing examples of varying isomorphism type. In \cite{ballaslong}, the authors draw attention to the general lack of non-free examples and raise the question of which isomorphism types of groups can arise as a thin subgroup of an arithmetic group.

Due to the interest in constructing thin groups of non-free isomorphism type and the relationship between surface subgroups of lattices and incompressible surfaces in locally symmetric spaces, we focus on constructing Zariski-dense surface subgroups of fundamental groups of hyperbolic manifolds. In dimension $3$, such examples are known: all lattices in $\operatorname{PSL}(2,\mathbb{C})$ contain thin surface subgroups. For uniform lattices, this follows from \cite{KM} while for non-uniform lattices, it follows from \cite{CLR}. In higher dimensions, the existence of thin surface subgroups is known for uniform lattices in $\operatorname{SO}^+(n,1)$ for odd $n$ by \cite{Hamenstadt} and for even $n$ by recent work in \cite{kahnrao}. What remains open is the case of non-uniform lattices when $n>3$. Presently, there is only example of a commensurability class of non-uniform lattices in each dimension known to contain a thin surface subgroup \cite{Douba}. Notably, these lattices are all arithmetic. In this paper, we extend these results by proving the following theorem.

\begin{theorem} 
\label{11}
Let $\Gamma$ be a non-uniform, arithmetic lattice in $\textnormal{SO}^{+}(n,1)$, with $n \geq 4$. Then $\Gamma$ has a thin surface subgroup.
\end{theorem}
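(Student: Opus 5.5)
Every non-uniform arithmetic lattice in $\operatorname{SO}^+(n,1)$ with $n\geq 4$ is commensurable with $\operatorname{SO}^+(f,\mathbb{Z})$ for some isotropic diagonal quadratic form $f=\langle a_1,\dots,a_n,-a_{n+1}\rangle$ over $\mathbb{Q}$. The plan is to reduce to these standard models and then build a surface subgroup inside a well-chosen low-dimensional piece.

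The reduction is straightforward. Containing a thin surface subgroup passes between commensurable groups: intersect the surface group with the finite-index subgroup. This gives a finite-index subgroup of a surface group, which is again a closed surface group, and it stays Zariski dense because the Zariski closure of a finite-index subgroup of a Zariski-dense group has finite index in the connected group $\operatorname{SO}^+(n,1)$. So it suffices to treat $\Gamma=\operatorname{SO}^+(f,\mathbb{Z})$.

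The key steps, in order, are as follows.

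\textbf{Step 1 (find a sublattice of Douba type).} Since $f$ is isotropic of rank $n+1\geq 5$, Hasse--Minkowski and the classification of rational quadratic forms let us split off a rank-$4$ subform $g$ of signature $(3,1)$, after a rational change of basis. I would choose $g$ rationally equivalent to a specific form, e.g.\ $\langle 1,1,1,-1\rangle$ or $\langle1,1,1,-d\rangle$, whose arithmetic group $\operatorname{SO}^+(g,\mathbb{Z})$ is a non-uniform lattice in $\operatorname{SO}^+(3,1)$. This realizes the stabilizer of a totally geodesic cusped hyperbolic $3$-suborbifold, and it is contained, up to commensurability, in $\Gamma$.

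\textbf{Step 2 (surface subgroup in dimension 3).} By \cite{CLR}, $\operatorname{SO}^+(g,\mathbb{Z})$ contains a surface subgroup, and in fact one can take a closed quasi-Fuchsian surface. Rather than rely on abstract existence, I would instead use an explicit ``folding'' construction in the spirit of \cite{Douba}. Take two totally geodesic cusped surfaces, or a cusped $3$-manifold $M$ with a totally geodesic boundary-like hypersurface. Form the double $DM$ along a hypersurface. Then bend, or use the Kapovich--Millson/Johnson--Millson bending trick, inside the ambient $\Gamma$. This yields a representation of $\pi_1(DM)$, and of a closed surface $S\subset DM$ obtained by capping cusps, which lands in $\Gamma$. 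The extra orthogonal directions available when $n\geq 4$, namely at least one rational vector $v$ orthogonal to the $g$-subspace, are what allow the bending to stay inside $\operatorname{SO}^+(f,\mathbb{Z})$. Concretely, I would use rational reflections/rotations in hyperplanes through $v$ that are integral.

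\textbf{Step 3 (faithfulness and Zariski density).} I would verify faithfulness and discreteness by a combination theorem, namely amalgamation along cusp or boundary subgroups with a ping-pong/convexity check. Zariski density would come from the bent group not preserving any proper totally geodesic subspace. The proper subspaces are only $\mathbb{H}^k$ for $k<n$, together with the obvious parabolic/elliptic cases. I would then iterate the bending to climb dimension by dimension up to $n$, giving density in $\operatorname{SO}^+(n,1)$.

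I expect the main obstacle to be Step 2's arithmetic integrality: showing the folding/bending elements can be chosen in $\operatorname{SO}^+(f,\mathbb{Z})$ for an arbitrary $f$, rather than for the special form treated in \cite{Douba}. I would first try to pass to a finite-index congruence subgroup, where the needed integral reflections exist after scaling the form locally at finitely many primes.
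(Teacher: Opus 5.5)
\textbf{Verdict: there is a genuine gap.} Steps 2 and 3 never construct the elements of $\Gamma$ the argument needs, and they never prove faithfulness.

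Your reduction to $\operatorname{SO}^+(f,\mathbb{Z})$ and your Step 1 are fine. Writing $f\cong H\perp f'$, with $H$ a hyperbolic plane, and taking $H$ plus any binary subform of $f'$ gives an isotropic signature-$(3,1)$ subform more directly than the paper's Hasse-invariant case analysis. You cannot, however, insist in general on a specific model such as $\langle 1,1,1,-1\rangle$.

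The gap is in Steps 2--3, which is where the theorem actually lives. Every surface subgroup of $\pi_1(M)$, including the closed quasi-Fuchsian ones of Cooper--Long--Reid, preserves the totally geodesic copy $H_G$ of $\mathbb{H}^3$. So the whole problem is to find elements of $\Gamma$ that push part of the group off $H_G$ while keeping the representation faithful, and your proposal does not produce them.

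Consider bending along a totally geodesic surface $F\subset M$ with associated $(2,1)$-subspace $W$.
\begin{itemize}
\item The bending element must centralize $\pi_1(F)$. Hence in $\operatorname{SO}^+$ it acts trivially on $W$ and by a rotation of the positive-definite complement $W^\perp$.
\item The integral such elements form a finite group. For $n=4$ it is cyclic of order $2$, $4$ or $6$, so it is just $\{\pm1\}$ unless the binary lattice in $W^\perp$ is square or hexagonal up to scaling.
\item $-1$ on $W^\perp$ preserves every line in $W^\perp$, hence preserves $H_G$, so that bend is trivial.
\end{itemize}
Your proposed remedy cannot work. Passing to a finite-index subgroup only removes elements, and rescaling the form does not put new elements into $\Gamma$. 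Even when a nontrivial integral rotation exists, it has finite order, so there is no parameter to make large and nothing for a ping-pong argument to exploit. ``A combination theorem with a ping-pong/convexity check'' is therefore not yet a faithfulness proof. You also never specify which closed surface you use, or why its image (rather than that of $\pi_1(DM)$) is Zariski dense.

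The missing idea is to glue over the \emph{cusps} instead of bending along a hypersurface. Pass to finite covers so that $M$ embeds in $X$ and its cusps lie in distinct torus cusps of $X$. Then each rank-$2$ cusp group $P_i$ of $\pi_1(M)$ sits inside a rank-$(n-1)$ cusp group of $\Gamma$. That larger group contains a parabolic $p_i$ commuting with $P_i$ but translating $H_G$ off itself. Such elements exist in every $\Gamma$, because its cusp groups are lattices in $\mathbb{R}^{n-1}$ while $P_i$ spans only a plane; so the integrality issue never arises.

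The paper then proceeds as follows.
\begin{itemize}
\item It forms the folded double $\pi_1(FM)=\langle \pi_1(M),t_i\mid [t_i,P_i]\rangle$. This contains $\pi_1(DM)$, and hence the closed surface group $\pi_1(D\Sigma)$ for $\Sigma$ a virtual fiber of $M$ (Agol).
\item It sends $t_i\mapsto p_i^{j_i}$, with $j_i$ large enough that the translated copies of $H_G$ are far apart inside pairwise distant horoballs.
\item It proves faithfulness by building, along the Britton normal form, broken geodesics whose segments are long and meet at angles greater than $\pi/2$.
\item Zariski density follows from Chen--Greenberg, since $\pi_1(\Sigma)$ and its conjugate by $p_0$ are Zariski dense in copies of $\operatorname{SO}^+(3,1)$ stabilizing different hyperplanes.
\item In higher dimensions it uses a nested chain $M_3\subset\cdots\subset M_{n-1}\subset X$ and sends different stable letters to parabolics pointing in new directions. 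This is why $M_3$ is taken with more than $n-3$ cusps.
\end{itemize}
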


The proof of Theorem~\ref{11} is an induction on the dimension $n$, beginning with $n=4$. We first find a subgroup of $\Gamma\leqslant\operatorname{SO}^+(4,1)$ isomorphic to the fundamental group of a cusped, arithmetic hyperbolic $3$-orbifold. Because this $3$-orbifold virtually fibers, it has a finite cover, $M$, with an embedded punctured subsurface $\Sigma$. We define the \emph{folded double} of $M$, $FM$, a graph of spaces whose fundamental group contains the fundamental group of the double of $\Sigma$, $D\Sigma$. We then describe how $FM$ maps into the $4$-orbifold $\mathbb{H}^4/\Gamma$ and show that the induced map $\pi_1(FM)\to\Gamma$ is faithful and its image Zariski-dense. Since $\pi_1(D\Sigma)$ is embedded in $\pi_1(FM)$, the result is a thin surface subgroup of $\Gamma$. Finally, we generalize to higher dimensions.

In the interest of the questions in \cite{ballaslong} on constructing \emph{any} non-free examples of thin subgroups of $\Gamma$ as above, we note that the proof additionally produces thin subgroups of $\Gamma$ isomorphic to $\pi_1(FM)$.

As a further application of the methods of Theorem~\ref{11}, we also prove a subgroup separability result about the groups considered. Recall that a discrete group of isometries of $\mathbb{H}^n$ is said to be \emph{GFERF} if it is subgroup separable on its geometrically finite subgroups. We use the folded double construction of Theorem~\ref{11} and results of \cite{BHW} to prove the following theorem.

\begin{theorem} 
\label{12}
Let $M$ be a cusped, arithmetic, hyperbolic $n$-orbifold and let $DM$ be the double of $M$ over its cusps. Then $\pi_1(DM)$ embeds discretely into $\operatorname{SO}^+(n+1,1)$ and its image is GFERF.
\end{theorem}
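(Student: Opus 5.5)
We will realize $M$ as a totally geodesic, codimension-one suborbifold of a cusped arithmetic $(n+1)$-orbifold, and then produce $DM$ by reflecting across it, cusp by cusp.

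\textbf{Step 1: embed $\pi_1(M)$ in a quadratic-form lattice.} Since $M$ is cusped and arithmetic, it is commensurable with $\mathbb{H}^n/\textnormal{SO}^+(f,\mathbb{Z})$ for some isotropic rational quadratic form $f$ of signature $(n,1)$. Pass to a torsion-free, finite-index subgroup of $\pi_1(M)$ contained in $\textnormal{SO}^+(f,\mathbb{Z})$. Set $q = f\oplus\langle 1\rangle$, which has signature $(n+1,1)$. Then $\textnormal{SO}^+(f,\mathbb{Z})$ sits in $\textnormal{SO}^+(q,\mathbb{Z})$ as the stabilizer of a totally geodesic hyperplane $H\cong\mathbb{H}^n$. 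The reflection $r$ in $e_{n+2}$ lies in $\textnormal{O}(q,\mathbb{Z})$ and centralizes this stabilizer.

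\textbf{Step 2: build a graph of groups for $\pi_1(DM)$.} We have
\[
\pi_1(DM)=\pi_1(M)*_{\{P_i\}}\pi_1(M),
\]
amalgamated along the cusp groups $P_i$. Choose a parabolic element $u\in\textnormal{SO}^+(q,\mathbb{Z})$ fixing the relevant cusp point, commuting with $P_1$, and translating in the direction normal to $H$. Form the conjugate $\pi_1(M)^{u^N}$, which stabilizes the hyperplane $u^N H$; this hyperplane is tangent to $H$ at the cusp. For several cusps we use a tree of such hyperplanes, or work with a single cusp at a time. The group generated by $\pi_1(M)$ and $\pi_1(M)^{u^N}$ should be the amalgam over $P_1$, which is the "folded double" picture.

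\textbf{Step 3: prove the amalgam is faithful and discrete.} Apply a Klein–Maskit combination argument to the half-spaces bounded by $H$ and $u^N H$. When $N$ is large, these hyperplanes are disjoint except at the common cusp, and the precisely invariant horoball neighbourhoods of the cusp are compatible. Ping-pong then gives injectivity and discreteness. The image is contained in the arithmetic group $\textnormal{SO}^+(q,\mathbb{Z})$ and is geometrically finite, since it is a combination of geometrically finite groups along parabolic subgroups.

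\textbf{Step 4: GFERF.} By \cite{BHW}, a discrete geometrically finite subgroup of a standard arithmetic lattice that is generated by reflections, or commensurable with such, is GFERF. Alternatively, their result states that geometrically finite subgroups of $\textnormal{SO}^+(q,\mathbb{Z})$ are separable in a virtually special cover. We use this to conclude that the image is GFERF, passing through finite index where needed.

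\textbf{Main obstacle.} The hardest part is the case where $M$ has several cusps. Gluing all cusps simultaneously requires a single conjugating element compatible with every cusp, which is a non-tree configuration in the graph of groups. I would try an iterated combination, adding one cusp at a time via HNN/amalgam combinations along distinct cusp orbits, with different $u_i^{N_i}$. The key check is that the orbits of the hyperplanes stay pairwise separated, and I would verify this using the fact that cusp stabilizers of the arithmetic lattice have finite-index integral translation lattices.
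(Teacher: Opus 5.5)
For a single cusp your argument is sound and is essentially the paper's. There, the image of $\pi_1(DM)$ contains $\pi_1(M)$ and $p\,\pi_1(M)\,p^{-1}$, where $p$ is a high power of a parabolic commuting with the cusp group and moving $H$ off itself. Your Klein--Maskit combination is a legitimate, arguably cleaner, replacement for the paper's broken-geodesic ping-pong, provided you separate along a hyperplane tangent at the cusp midway between $H$ and $u^NH$, not along $H$ and $u^NH$ themselves, and take $N$ large so the corresponding horoball neighbourhoods of the cusp in $H$ and $u^NH$ are precisely invariant.

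The genuine gap is the case of $k\ge 2$ cusps, which you flag but do not resolve. Note also that for $k\ge2$ your formula $\pi_1(M)*_{\{P_i\}}\pi_1(M)$ is not an amalgamated product. It is an amalgam over $P_1$ followed by $k-1$ HNN extensions.

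This is exactly where the paper's key idea, the \emph{folded double}, enters. Take one vertex $M$ and one loop per cusp, so that $\pi_1(FM)=\langle \pi_1(M),t_1,\dots,t_k\mid [t_i,P_i]\rangle$. Add to the graph for $DM$ an edge with edge group $\pi_1(M)$ and identity edge maps, then change the spanning tree. This shows
\[
\pi_1(DM)\cong\langle \pi_1(M),\ t_1\pi_1(M)t_1^{-1},\ t_1t_i^{-1}\ (2\le i\le k)\rangle\leqslant \pi_1(FM),
\]
since $t_1t_i^{-1}\,c\,t_it_1^{-1}=t_1ct_1^{-1}$ for $c\in P_i$. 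So no single conjugator has to be compatible with every cusp. You choose, independently for each cusp, a parabolic $u_i$ commuting with $P_i$ and not preserving $H$, send $t_i\mapsto u_i^{N_i}$, and prove that the whole multiple HNN extension $\pi_1(FM)$ injects. The paper does this with Britton's-lemma normal forms and a ping-pong along broken geodesics through horoballs that are pairwise at distance at least $D$.

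Choosing the $u_i$ also requires something you skip. You must first pass to finite covers in which $X$ has torus cusps, $M$ is embedded, and distinct cusps of $M$ lie in distinct cusps of $X$ \cite{MRS}. Your ``iterated combination'' might be made to work via Maskit's HNN combination theorem with stable letters $u_1^{N_1}u_i^{-N_i}$. However, identifying those letters and verifying precisely invariant regions for each pair of cusp points inside the already-combined group is precisely the part you left open.

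Finally, Step 4 misstates \cite{BHW}; its statement is not about reflection groups. What is used is that non-uniform standard arithmetic lattices such as $\operatorname{SO}^+(q,\mathbb{Z})$ are GFERF. This passes to subgroups because geometric finiteness of a subgroup is intrinsic to its action on $\mathbb{H}^{n+1}$.
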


Note that the property of a group being GFERF is weaker than being \emph{LERF}, i.e., being subgroup separable on all finitely generated subgroups. Nonetheless, there has been interest in both properties because of their use in lifting immersed incompressible submanifolds to embedded ones in a finite cover. More intrinsic versions of this theorem, not reliant on embeddings into $\operatorname{SO}^+(n,1)$ are also discussed at the end of \S\ref{ch:GFERF}.

In the course of the proof of Theorem~\ref{12}, we also show that LERF is too restrictive a property for these groups. Though the fundamental groups of hyperbolic $3$-manifolds are LERF by \cites{Agol2,Wise}, the fundamental groups of higher dimensional arithmetic hyperbolic manifolds are not by \cites{Sun,BSS}. Since, also by \cite{Sun}, the fundamental groups of doubles of non-compact, arithmetic, hyperbolic $3$-manifolds over their cusps are not LERF, showing these groups are GFERF is significant. The starting point for Theorem~\ref{12} is the work of Bergeron--Haglund--Wise in \cite{BHW} showing that fundamental groups of arithmetic hyperbolic manifolds of simplest type are GFERF\@. From there, we generalize the methods of \cite{LR}, where it was shown that the double of the figure-$8$ knot complement over its cusps is GFERF, extending this construction to the doubles of all non-compact arithmetic hyperbolic $n$-manifolds.

\subsubsection*{Acknowledgements} This work was part of the first author's Ph.D.\ thesis. She thanks her advisor Alan Reid for many years of guidance and support. The authors also thank Daniel Groves and Ryan Spitler for ideas that contributed directly to this work and Sami Douba, Chris Leininger and Darren Long for their comments and suggestions. The second author was supported by NSF grant DMS-2136217.

\section{Non-uniform Arithmetic Subgroups of \texorpdfstring{$\operatorname{SO}^+(n,1)$}{SO+(n,1)}}
\label{groups.and.qf}
We first discuss arithmetic groups defined by quadratic forms to motivate how to find subgroups isomorphic to fundamental groups of cusped $3$-manifolds. Recall that if $X$ is a non-uniform hyperbolic $n$-orbifold, its fundamental group, $\Gamma<\operatorname{SO}^+(n,1)$, acts on $\h^n$ and admits a non-compact fundamental domain of finite-volume. If, in addition, $\Gamma$ is arithmetic, then $\Gamma$ has a conjugate commensurable to a group of the form $\operatorname{SO}^+(q,\mathbb{Z})$ for some integral diagonal quadratic form, $q$, of signature $(n,1)$ (this fact follows from the Tits classification, but see, for instance, \cite{Witte}*{Proposition 6.4.2}). Recall that $\operatorname{SO}^+(q,\mathbb{Z}):=\{g\,|\, g\in\operatorname{SL}(n,\mathbb{Z}),\ g^T A_q g=A_q\}$ where $A_q$ is the diagonal matrix whose entries are the coefficients of $q$. Since, to prove Theorem~\ref{11}, it suffices to show that a group commensurable to $\Gamma$ up to conjugacy contains a thin surface subgroup, we will instead work with the more explicit groups $\operatorname{SO}^+(q,\mathbb{Z})$ for suitable $q$.

Specializing to the case where $n=4$, the starting point of our induction, we leverage the fact that we have explicit representatives for quadratic forms in each commensurability class of non-uniform, arithmetic lattices. First, we fix some notation.

\begin{definition}
    For integers $a_1,\ldots, a_n$, we let $q=\langle a_1,\ldots,a_n\rangle$ denote the diagonal quadratic form defined by $q(x_1,\ldots,x_n)=a_1 x_1^2+\cdots+a_n x_n^2$.
\end{definition}

From \cite{Montesinos}, when $n=4$, the commensurability class of $\operatorname{SO}^+(q,\mathbb{Z})$ is uniquely determined by the projective equivalence class of $q$. Furthermore, from \cite{Montesinos}, we can take $q$ to be
\[
q=\begin{cases}
    \langle -1, 1, 1, aS, a\rangle & S\equiv 1\pmod{4}\\
    \langle 1,1,1,aS,-a\rangle & S\equiv -1\pmod{4}
\end{cases}
\]
where $S$ is the product of $n$ distinct odd primes and $a$ is an odd prime such that $a\nmid S$. Additionally, we may require that $\legendre[p_i]{a} = -1$ for all primes $p_i$ dividing $S$, that $a\equiv (-1)^n\pmod{4}$ when $S\equiv 1\pmod{4}$ and that $a\equiv (-1)^{n+1}\pmod{4}$ when $S\equiv -1\pmod{4}$.

\begin{lemma} \label{lemma:groups.and.qf}
    Let $q$ be as above. The group $\SOqZ$ contains some subgroup that is isomorphic to the fundamental group of a non-compact arithmetic hyperbolic 3-manifold or orbifold defined via a quadratic form. 
\end{lemma}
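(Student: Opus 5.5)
The plan is to find a four-dimensional sublattice of $\mathbb{Z}^5$ on which $q$ restricts to an isotropic form of signature $(3,1)$. The stabilizer of that sublattice inside $\SOqZ$ then provides the desired subgroup.

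Concretely, in both cases of the normal form I would take the coordinate hyperplane $x_4=0$ and the restriction $q'=q|_{x_4=0}$. This gives
\[
q'=\langle -1,1,1,a\rangle \quad\text{or}\quad q'=\langle 1,1,1,-a\rangle ,
\]
according as $S\equiv 1$ or $S\equiv -1\pmod 4$. In each case $q'$ is diagonal, integral, and of signature $(3,1)$, so $\operatorname{SO}^+(q',\mathbb{Z})$ is an arithmetic lattice in $\operatorname{SO}^+(3,1)$. The hyperplane $V=\{x_4=0\}$ has orthogonal complement spanned by $e_4$, and $q(e_4)=aS\neq 0$. I would therefore embed $\operatorname{SO}^+(q',\mathbb{Z})$ into $\SOqZ$ by $g\mapsto \operatorname{diag}(g,1)$, with the block $g$ acting on the coordinates $x_1,x_2,x_3,x_5$ and $e_4$ fixed. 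This map is an injective homomorphism. Its image preserves $q$, has integer entries and determinant $1$, and preserves the upper sheet of the hyperboloid because the negative direction lies in $V$. So it lands in $\SOqZ$. Geometrically, the image is the stabilizer of the totally geodesic $\mathbb{H}^3\subset\mathbb{H}^4$ cut out by $x_4=0$.

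The main step is showing that $\operatorname{SO}^+(q',\mathbb{Z})$ is non-uniform, i.e.\ that $q'$ is isotropic over $\mathbb{Q}$.
\begin{itemize}
\item In the first case, $q'(1,1,0,0)=-1+1=0$, so $q'$ is visibly isotropic.
\item In the second case, $q'=x_1^2+x_2^2+x_3^2-ax_5^2$. Here $a\equiv(-1)^{n+1}=-1\pmod 4$ for $n=4$. I would try to use the other normal-form conditions to choose $a$ not $\equiv 7\pmod 8$, so that Legendre's three-square theorem writes $a$ as a sum of three squares. The difficulty is that $a\equiv 7\pmod 8$ is possible, and then $\langle1,1,1,-a\rangle$ is anisotropic at $2$.
\end{itemize}

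If $a\equiv 7\pmod 8$ arises, I would instead pick a different $4$-dimensional sublattice, e.g.\ $x_1=x_2$ or a hyperplane orthogonal to a vector involving the $aS$ coordinate. The goal is a restricted form such as $\langle 2,1,aS,-a\rangle$ or $\langle 1,1,aS,-a\rangle$. The latter is isotropic since $aS-a=a(S-1)$ can be balanced, and more simply it contains a hyperbolic plane whenever some $x^2+y^2$ represents a multiple matching $a(S\cdot u^2-v^2)$. I would check isotropy with Hasse--Minkowski, since a quaternary form is isotropic everywhere except possibly at finitely many primes dividing $2aS$. The Legendre symbol conditions $\legendre[p_i]{a}=-1$ would be the tool for the local computations at $p_i\mid S$.

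Once the restricted form is isotropic with signature $(3,1)$, Borel--Harish-Chandra and Godement's criterion give that $\operatorname{SO}^+(q',\mathbb{Z})$ is a non-uniform arithmetic lattice. Passing to a torsion-free finite-index subgroup by Selberg's lemma if a manifold rather than an orbifold is wanted, the quotient $\mathbb{H}^3/\operatorname{SO}^+(q',\mathbb{Z})$ is the required cusped arithmetic $3$-orbifold defined by a quadratic form.

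I expect the local isotropy check at $2$ in the $S\equiv -1\pmod 4$ case to be the only real obstacle.
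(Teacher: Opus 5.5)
You handle the embedding and the easy cases exactly as the paper does: the coordinate subforms $\langle -1,1,1,a\rangle$ and $\langle 1,1,1,-a\rangle$, the block embedding, the isotropic vector $(1,1,0,0)$, and the three-square theorem when $a\not\equiv 7\pmod 8$. However, the case $S\equiv -1\pmod 4$, $a\equiv 7\pmod 8$ is left unproved, and neither substitute you suggest works under the lemma's hypotheses as stated. (The $n$ in $a\equiv(-1)^{n+1}\pmod 4$ counts the prime factors of $S$, not the dimension; the bad case requires that count to be even.)

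\textbf{First substitute.} The reason you give for isotropy of $\langle 1,1,aS,-a\rangle$, that ``$a(S-1)$ can be balanced,'' is not an argument, and the claim is false in general. If $S\equiv 7\pmod 8$, then $aS\equiv -a\equiv 1\pmod 8$, so both are squares in $\mathbb{Q}_2$. The form is then $\langle 1,1,1,1\rangle$ over $\mathbb{Q}_2$, which is anisotropic. A concrete instance meeting every normal-form condition is $S=15$, $a=23$, i.e.\ $q=\langle 1,1,1,345,-23\rangle$.

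\textbf{Second substitute.} For $\langle 2,1,aS,-a\rangle$ the prime $2$ is harmless: the discriminant $-2a^2S$ has odd valuation at $2$ and at each $p_i\mid S$. The real test is at $p=a$. There the discriminant class $-2S$ is a unit and $\varepsilon_a=\legendre[a]{S}$, so if $-2S$ is a square mod $a$, isotropy requires $\legendre[a]{S}=(-1,-1)_a=1$. With the stated condition $\legendre[p_i]{a}=-1$, quadratic reciprocity gives $\legendre[a]{S}=-1$ and hence $\legendre[a]{-2S}=1$. So this form is anisotropic over $\mathbb{Q}_a$; for $S=15$, $a=23$ one has $-30\equiv 4^2 \pmod{23}$ and $\legendre[23]{15}=-1$. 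It would be isotropic under the variant normalization $\legendre[p_i]{-a}=-1$ used in the appendix, but you did not carry out this check. As written, the hard case remains open.

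\textbf{How the paper closes the case.} It changes the representative of the rational class, not the subspace. By the Chinese remainder theorem and Dirichlet's theorem it picks a prime $a'\equiv 3\pmod 8$ with $a'\equiv a\pmod S$. It then argues via discriminant and Hasse--Minkowski invariants (computed with the appendix normalization) that $\langle 1,1,1,a'S,-a'\rangle\cong_{\mathbb{Q}} q$, so the integral groups are commensurable up to conjugacy. Finally it applies the three-square theorem to $\langle 1,1,1,-a'\rangle$.

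\textbf{A uniform fix in your framework.} Your non-coordinate-hyperplane idea can be made to work with no Legendre-symbol bookkeeping. Since $q$ is indefinite in five variables, Meyer's theorem gives a rational isotropic vector $v$. Because $v^\perp/\mathbb{Q}v$ is positive definite, you can choose a rational $w\perp v$ with $q(w)>0$. Then $V=w^\perp$ is a rational hyperplane on which $q$ is nondegenerate, and it contains $v$, so $q|_V$ is isotropic of signature $(3,1)$. The elements of $\SOqZ$ preserving $V$ and fixing $V^\perp$ pointwise form a non-uniform arithmetic lattice in $\operatorname{SO}^+(q|_V)\cong\operatorname{SO}^+(3,1)$, commensurable with $\operatorname{SO}^+(f,\mathbb{Z})$ for a diagonalization $f$ of $q|_V$. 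The only cost relative to the paper is that the totally geodesic $\mathbb{H}^3$ is no longer a coordinate hyperplane, so the upper-left-corner description is replaced by a stabilizer.
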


\begin{proof}

We will show that for each projective equivalence class there is some $q$ that contains some diagonal subform $f$ that is isotropic and has signature $(3,1)$. Then the representation of $\SOfZ$ into $\SOqZ$ defined by $g \mapsto g \oplus [1]$ forms our subgroup. We refer to this representation as the \emph{upper left corner} representation. 

When $q =  \langle -1,1,1,aS,a \rangle$, it has the subform $f =  \langle -1,1,1,a \rangle$. One can see that $f$ is isotropic because the quadratic equation $-x_0^2+x_1^2+x_2^2+ax_3^2=0$ has the non-trivial solution $(1,1,0,0)$. 

On the other hand when $q =  \langle 1,1,1,aS,-a \rangle$, it has the subform $f =  \langle 1,1,1,-a \rangle$. By Legendre's three square theorem the equation $x_0^2+x_1^2+x_2^2-ax_3^2=0$ has a solution if and only if $a \not\equiv 7 \mod 8$. In this case $f$ is isotropic. 

The only case that remains is when $S \equiv -1 \mod 4$ and $a \equiv 7 \mod 8$. We will deal with this case by demonstrating that there is some form $q' = \langle 1,1,1,a'S,-a' \rangle$ that is projectively equivalent to $q$ such that $a' \equiv 3 \mod 8$, allowing us to use $\langle 1,1,1,-a'\rangle$ as an isotropic subform. 

By the Hasse-Minkowski theorem, two quadratic forms are equivalent over $\mathbb{Q}$ if their signature, discriminant, and Hasse-Minkowski invariants $c_p(\cdot)$ are the same \cite{Serre}. When $S \equiv -1 \mod 4$ and $a \equiv 3 \mod 4$, the Hasse-Minkowski invariants of $q$ can be shown\footnote{See Appendix~\ref{append:hasse} for Hasse-Minkowski Invariant computations.} to be:

\[c_p(q)= \begin{cases} 
      1 & p = 2 \\
      1 & p = a \\
      -1 & p | S \\
      1 & \textrm{otherwise.} \\
   \end{cases}
\] 
Notice that under the above assumptions, the Hasse-Minkowski invariants rely only on $S$. Furthermore, the discriminant $\Delta(q) = S$. Now suppose that $q =  \langle 1,1,1,aS,-a \rangle$ with $a \equiv 7 \mod 8$. Because the discriminant and Hasse-Minkowski invariants rely only on $S$ and the congruence class of $a \mod 4$, we simply need to find some $a' \equiv 3 \mod 8$ such that $a'$ is an odd prime that does not divide $S$ and $\legendre[p_i]{a'} = -1$ for any prime $p_i$ that divides $S$. Then $q' = \langle 1,1,1,a'S,-a' \rangle$ is equivalent to $q$. 

We can employ classical results to show that a desired $a'$ exists. By Sunzi's remainder theorem\footnote{Formerly known as the Chinese Remainder Theorem.} \cite{Gauss}, there is some $n<8S$ such that $n \equiv a \mod S$ and $n \equiv 3 \mod 8$. Furthermore, because $a$ is coprime to $S$, and 3 is coprime to 8, $n$ is coprime to $8S$. Then by Dirichlet's theorem \cite{Serre} there are infinitely many primes $a'$ such that $a' \equiv n \mod 8S$. These primes meet the necessary conditions to ensure that $q'$ is equivalent to $q$. 
\end{proof} 

{Note that the subgroup of $\operatorname{SO}^+(q,\mathbb{Z})$ constructed in this lemma is never Zariski-dense as its Zariski-closure is always isomorphic to some copy of $\operatorname{SO}^+(3,1)$ inside $\operatorname{SO}^+(4,1)$. In the next section, we show how one can modify this subgroup to construct a thin surface subgroup inside $\operatorname{SO}^+(q,\mathbb{Z})$.} By commensurability this will give us a thin surface subgroup in $\Gamma$. 

\section{Doubles of Fibered 3-Manifolds in 4-Manifolds and Orbifolds} \label{double}
The representation we constructed in Section~\ref{groups.and.qf} induces an immersion of a cusped, arithmetic, hyperbolic 3-orbifold $M^3$ into our 4-orbifold $X$. As previously mentioned these orbifolds virtually fiber \cite{Agol}, allowing us to pass to a finite cover of $M$ that is a fibered 3-manifold. The fibers are punctured surfaces, $\Sigma$, whose punctures meet the cusps of $M$. Truncating the cusps of $M$ and then doubling over the truncated cusps glues the punctures of $\Sigma$ to the punctures in its copy, resulting in a closed surface (see Figure~\ref{fig:mfld.double}). It is this surface whose fundamental group will ultimately form a thin subgroup of $\Gamma$. In this section, we describe how to embed the fundamental group of this double into a non-uniform arithmetic subgroup of $\textnormal{SO}^+(4,1)$.

\subsection{The Folded Double}
\label{ch:folded.double}

We now define the folded double of a finite volume, non-compact, hyperbolic arithmetic 3-manifold. The definition uses the language of graphs of spaces, which allow us to construct useful geometric realizations corresponding to groups defined by graphs of groups. 

\begin{definition}
A \emph{graph of spaces} $(W,Y)$ is a graph $Y$ and a collection, $W$, of connected topological spaces associated to each vertex $v\in V(Y)$, $W_v$, and each edge $e=(v_0,v_1)\in E(Y)$, $W_e$, along with $\pi_1$-injective, continuous maps $\alpha_{(e,0)}$ and $\alpha_{(e,1)}$ from $W_e$ to the spaces associated to each vertex of $e$.
\end{definition}

A graph of spaces $(W,Y)$ has an associated geometric realization, $X$, formed by taking the disjoint union of the $W_v$ with the $W_e \times [0,1]$, then taking the quotient by the relation that glues $W_e\times\{0\}$ to the image of $\alpha_{(e,0)}$ in $W_{v_0}$ and gluing $W_e\times\{1\}$ to the image of  $\alpha_{(e,1)}$ in $W_{v_1}$. By \cite{Althoen}, when $X$ is connected, its fundamental group is simply $\pi_1(G,Y)$, where $(G,Y)$ is the graph of groups with vertex groups $G_v = \pi_1(W_v)$, edge groups $G_e = \pi_1(W_e)$, and monomorphisms induced by the $\pi_1$-injective, continuous maps in $(W,Y)$.

The graphs of spaces construction will be the basis for how we define representations of our arithmetic hyperbolic lattices. Let $M$ be a non-compact, fibered, hyperbolic arithmetic 3-manifold with finitely many cusps, $C_0, C_1,\ldots, C_{n-1}$. $M$ has a topological double $DM$ formed by making a copy of the manifold, truncating the cusps and identifying each boundary component with its counterpart in the copy. Because $M$ has the fiber surface $\Sigma$, $DM$ has a fiber subsurface $D\Sigma$, formed by doubling the fiber surface. Note that $D\Sigma$ is constructed by copying $\Sigma$ and gluing each puncture to the corresponding puncture in the copy, closing the surface. 

This doubling process can be recreated up to homeomorphism using a graph of spaces where the vertices are copies of the manifold with the cusps truncated, the edges are cusp cross-sections, and the edge maps are the identification of each cross-section with the corresponding boundary component in the truncated manifold. This gives a natural way of computing $\pi_1(DM)$ as the fundamental group of the graph of groups shown in Figure~\ref{fig:graph.double}, where $M'$ denotes the copy of $M$ and $C_i'$ is denotes copy of $C_i$ in $M'$. If we use the edge $t_0$ as the minimal spanning tree, we get that
\[
\pi_1(DM)\cong\langle \pi_1(M), \, \pi_1(M'), \, t_1,\ldots,t_{n-1} \rangle / R
\]
where the $t_i$ are the stable letters and $R$ is the normal subgroup generated by the relations:
\[
\{ c_0c_0'^{-1}, \, t_ic_it_i^{-1}c_i'^{-1} \,|\,  c_i \in \pi_1(C_i), c_i' \in \pi_1(C_i')\textrm{ for all } 0<i<n\}.
\]

\begin{figure}[!ht] 
\centering
\begin{tikzpicture}[-,auto,node distance=100pt, thick,main node/.style={circle,draw,font=\sffamily}]

  \node[main node] (1) {$M$};
  \node[main node] (2) [right of=1] {$M'$};
  \node (3) [right of=1, node distance=50pt] {$\vdots$};

  \path[every node/.style={font=\sffamily\small}]
    (1) edge [bend left = 45] node[below] {$C_0$} (2)
        edge [bend right = 45] node[above] {$C_{n-1}$} (2);

  \node[main node] (4) [right of=2] {$\pi_1(M)$};
  \node[main node] (5) [right of=4] {$\pi_1(M')$};
  \node (6) [right of=4, node distance=50pt] {$\vdots$};

  \path[every node/.style={font=\sffamily\small}]
    (4) edge [bend left = 45] node[above] {$t_0$} node[below] {$\pi_1(C_{0})$}    (5)
        edge [bend right = 45] node[above] {$t_{n-1}$} node[below] {$\pi_1(C_{n-1})$} (5);

\end{tikzpicture}
\caption{The graph of spaces that forms $DM$, alongside the corresponding graph of groups that gives us $\pi_1(DM)$}
\label{fig:graph.double}
\end{figure}

\begin{figure}[!ht] 
\centering
\begin{tikzpicture}[-,auto,node distance=100pt, thick,main node/.style={circle,draw,font=\sffamily}]

            \draw[rounded corners=19pt](6.3,-1.17)--(5.53,-.8)--(4.3,-1.55)--(3.25,-.8)--(1.6,-1.5)--(0.1,0) -- (1.6,1.5)--(3.25,.75)--(4.3,1.6)--(5.53,.75)--(7,1.5)--(8.5,0) -- (7,-1.5)--(5.53,-.8)--(4.3,-1.55)--(3.25,-.8)--(2.125,-1.27);


            \draw[rounded corners=10pt](3.95,-.875)--(3.6,-.6)--(3.6,.6)--(4.3,1.2)--(5,.6)--(5,-.6)--(4.3,-1.15) -- (3.95,-.875);
            
            \draw (1.5,0.02) arc (175:315:.5cm and 0.25cm);
            \draw (2.25,-0.21) arc (-30:180:0.35cm and 0.15cm);

            \draw (6.3,0.02) arc (175:315:.5cm and 0.25cm);
            \draw (7.05,-0.21) arc (-30:180:0.35cm and 0.15cm);

            \draw[color = red, rounded corners=10pt](3.8,-.88)--(3.3,-.5)--(3.2,.4)--(4.3,1.27)--(5.3,.5)--(5.4,-.4)--(4.3,-1.26)--(3.8,-.88);
            \draw[color = red, rounded corners=10pt](3.725,-.925)--(3.15,-.5)--(1.5,-.7)--(1.1,0) -- (1.5,.7)--(3.15,.5)--(4.3,1.35)--(5.45,.5)--(7.1,.7)--(7.5,0) -- (7.1,-.7)--(5.45,-.5)--(4.3,-1.35)--(3.725,-.925);

            \node (a) at (1,.5) {$M$};
            \node (b) [right of=a, node distance=188pt] {$M$ };
            \node (sigma) [color = red, below of=a, node distance=25pt] {$\Sigma$ };
            \node (sigma2) [color = red, right of=sigma, node distance=188pt] {$\Sigma$ };
            \node (c1) at (4.3,.75) {$C_0$};
            \node (c2) [below of=c1, node distance=43.5pt] {$C_1$};

\end{tikzpicture}
\caption{Diagram of the subsurface $D\Sigma$ in $DM$}
\label{fig:mfld.double}
\end{figure}

In order to embed $\pi_1(DM)$ into the fundamental group of a cusped arithmetic hyperbolic 4-orbifold we first further embed $\pi_1(DM)$ into the fundamental group of a ``self-intersecting manifold'' as an infinite-index subgroup.  

\begin{definition}
Let $M$ be a manifold with boundary. The \emph{folded double}, $\fold$, of $M$ is formed by taking the double $DM$ as defined in graph of spaces construction and identifying the two copies of $M$ that comprise the vertex set. This graph has one vertex $M$ and $n$ loops, whose edge spaces are the cusps of $M$.  
\end{definition}

From the graph of spaces description of the double, we see that since the edge spaces are $[0,1] \times S_i$ where $S_i$ is the cross-section of $C_i$, then after identifying vertices to form the folded double, we get a mapping torus in $\fold$ for each boundary component. Additionally, after folding, $\fold$ contains an immersed subsurface $F\Sigma$, formed by taking $D\Sigma$ and folding it back onto itself. Because we use the graph of spaces construction there is a self-intersecting torus that meets the punctures of $\Sigma$.

When we extend this folding operation to the fundamental group, we get a description of $\pi_1(\fold)$ as a modified HNN extension of $\pi_1(M)$ over multiple boundary components. As before we can explicitly compute this fundamental group using the corresponding graph of groups pictured in Figure~\ref{fig:graph.fold}. This tells us that $\pi_1(\fold)$ has the presentation below, where $[t_i,c_i]$ is the commutator. 
\[
\pi_1(\fold) = \langle \pi_1(M),t_0,t_1,\ldots, t_{n-1} \hspace{3pt} | \hspace{3pt} [t_i,c_i], \hspace{3pt} \forall  0\leq i < n, \hspace{3pt} c_i \in \pi_1(C_i) \rangle 
\]

\begin{figure}[!ht] 
\centering
\begin{tikzpicture}[-,auto,node distance=100pt, thick,main node/.style={circle,draw,font=\sffamily}]

  \node[main node] (1) {$M$};
  \node (2) [right of=1, node distance=25pt] {$\vdots$};

  \path[every node/.style={font=\sffamily\small}]
    (1) edge [out=20,in=50,looseness=12] node[above] {$C_0$} (1);
  \path[every node/.style={font=\sffamily\small}] 
    (1) edge [out=335,in=305,looseness=12] node[below] {$C_{n-1}$} (1);

  \node[main node] (4) [right of=2] {$\pi_1(M)$};
  \node (5) [right of=4, node distance=30pt] {$\vdots$};

  \path[every node/.style={font=\sffamily\small}]
    (4) edge [out=15,in=45,looseness=8] node[above] {$\pi_1(C_{0})$} (4);
  \path[every node/.style={font=\sffamily\small}] 
    (4) edge [out=335,in=305,looseness=8] node[below] {$\pi_1(C_{n-1})$} (4);

\end{tikzpicture}
\caption{The graph of spaces, $\fold$, and the graph of groups that gives us $\pi_1(\fold)$}
\label{fig:graph.fold}
\end{figure}

\begin{figure}[!ht] 
\centering
\begin{tikzpicture}[-,auto,node distance=100pt, thick,main node/.style={circle,draw,font=\sffamily}]

            \draw[rounded corners=25pt](4.2,-1.4)--(3.15,-.75)--(1.5,-1.5)--(0,0) -- (1.5,1.5)--(3.15,.75)--(4.2,1.4);

            \draw (1.5,0.02) arc (175:315:.5cm and 0.25cm);
            \draw (2.25,-0.21) arc (-30:180:0.35cm and 0.15cm);

            \draw[rounded corners=10pt](4.3,-1.2)--(3.6,-.6)--(3.6,.6)--(4.3,1.2);

            \draw[color = red, rounded corners=8pt](4.25,-1.25)--(3.3,-.5)--(3.2,.4)--(4.25,1.25);
            \draw[color = red, rounded corners=10pt](4.2,-1.35)--(3.15,-.5)--(1.5,-.7)--(1.1,0) -- (1.5,.7)--(3.15,.5)--(4.2,1.3);
            \draw[gray] (4.3,1.21) arc (0:60:0.2cm);
            \draw[color = red, rounded corners=3pt](4.25,1.25)--(4.58,1.53)--(4.73,1.33)--(4.6,1.19)--(4.2,1.3);
            \draw[gray] (4.3,-1.21) arc (0:-60:0.2cm);
            \draw[color = red, rounded corners=3pt](4.25,-1.25)--(4.58,-1.53)--(4.73,-1.33)--(4.6,-1.19)--(4.2,-1.35);

            \draw[] (4.49,1.35) ellipse (0.3 and 0.2);
            \draw[rounded corners=3pt] (4.29,1.2)--(4.59,1.45)--(4.59,1.25)--(4.2,1.4);
            \draw[] (4.49,-1.35) ellipse (0.3 and 0.2);
            \draw[rounded corners=3pt] (4.29,-1.2)--(4.59,-1.45)--(4.59,-1.25)--(4.2,-1.4);

            \node (a) at (.95,.5) {$M$};
            \node (sigma) [color = red, below of=a, node distance=25pt] {$\Sigma$ };
            \node (c1) at (4.5,.75) {$C_0$};
            \node (c2) [below of=c1, node distance=45pt] {$C_1$};

\end{tikzpicture}
\caption{Diagram of $\fold$ as it appears in $X$. We will construct a copy of $\fold$ such that it and the desired closed surface will be immersed in $X$, not embedded.
}
\label{fig:mani.fold}
\end{figure}

\begin{lemma}\label{lemma:graph.injection}
$\pi_1(\fold)$ has a surface subgroup. 
\end{lemma}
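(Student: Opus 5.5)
The plan is to exhibit $\pi_1(D\Sigma)$ inside $\pi_1(\fold)$ by factoring through $\pi_1(DM)$. There are two steps: first embed $\pi_1(D\Sigma)$ in $\pi_1(DM)$, then embed $\pi_1(DM)$ in $\pi_1(\fold)$. Since $D\Sigma$ is a closed surface of genus at least $2$, this gives the required surface subgroup. The first step is the easy one. $DM$ is itself a graph of spaces with vertex spaces $M, M'$ (truncated) and edge spaces the cusp tori. Inside it, $D\Sigma$ is the sub-graph of spaces with vertex spaces $\Sigma, \Sigma'$ (compact fibers with boundary) and edge spaces the boundary circles $\partial\Sigma\cap C_i$.

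Each boundary circle of $\Sigma$ is essential in the corresponding cusp torus, because $\Sigma$ is a fiber. Each fiber is $\pi_1$-injective in $M$, by the long exact sequence of the fibration. So the inclusion of graphs of groups
\[
(\pi_1\Sigma,\pi_1\Sigma',\mathbb{Z})\hookrightarrow(\pi_1M,\pi_1M',\pi_1C_i)
\]
is injective on vertex and edge groups. It also satisfies the compatibility condition
\[
\pi_1\Sigma\cap \pi_1C_i=\pi_1(\partial_i\Sigma).
\]
This holds because a peripheral element lying in the fiber group must lie in the boundary subgroup: it maps trivially to $\mathbb{Z}$ under the fibration map, and $\pi_1 C_i\cap\ker=\langle\partial_i\rangle$. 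Note that if $\Sigma$ meets a cusp in several circles, one first passes to a cover or uses several edges. By the standard criterion for graphs of groups (Bass), $\pi_1(D\Sigma)\hookrightarrow\pi_1(DM)$. Alternatively, one can argue topologically: $D\Sigma$ is an embedded two-sided incompressible surface, being a fiber of $DM$ over $S^1$, since the fibrations of $M$ and $M'$ glue along the cusp tori.

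The second step is to embed $\pi_1(DM)$ in $\pi_1(\fold)$. The idea is to realize $DM$ as a double cover of $\fold$, or at least as a cover, by writing down the corresponding subgroup. Define $\phi:\pi_1(\fold)\to\mathbb{Z}/2$ by sending $\pi_1(M)\mapsto 0$ and every $t_i\mapsto 1$. This is well defined, since the relations $[t_i,c_i]$ are killed. The kernel is the fundamental group of the double cover of the graph of groups. Its underlying graph is the double cover of the rose in which each loop lifts to an edge joining the two lifts of the vertex, which is exactly the graph in Figure~\ref{fig:graph.double}. The vertex groups are $\pi_1M$ and $t_0\pi_1Mt_0^{-1}\cong\pi_1M'$, and the edge groups are $\pi_1C_i$.

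The main point to check is that the edge maps agree with those of $DM$. In $\fold$, each edge identifies $c_i$ with $t_ic_it_i^{-1}=c_i$. So in the cover, the edge joins $c_i\in\pi_1M$ to the corresponding element $c_i'$ of the conjugate copy, which is precisely the gluing in $DM$. Then Bass--Serre covering theory identifies $\ker\phi\cong\pi_1(DM)$, an index-two subgroup.

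I expect this identification of edge maps to be the main obstacle. Concretely, one has to verify that $t_0^{-1}t_i$ play the role of the stable letters in the presentation of $\pi_1(DM)$ given above, and that the peripheral identification is the identity rather than some twisted map. Once that is done, composing the two injections gives $\pi_1(D\Sigma)\hookrightarrow\pi_1(\fold)$. Here $\pi_1(D\Sigma)$ is a closed surface group of genus $\geq 2$, since $\chi(D\Sigma)=2\chi(\Sigma)<0$.
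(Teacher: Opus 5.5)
You have a genuine gap in the second step. Your first step is fine: the fibrations of $M$ and $M'$ agree on the cusp tori, so $D\Sigma$ is the fiber of a fibration $DM\to S^1$ and $\pi_1(D\Sigma)$ injects. The paper simply takes this as known.

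The problem is that $\ker\phi$ is not $\pi_1(DM)$, and $DM$ is not a double cover of $FM$. In a degree-two covering of graphs every edge has two lifts. So the double cover of the rose with $n$ petals has two vertices and $2n$ edges, not $n$.

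Accordingly, $\ker\phi$ is the fundamental group of a graph of groups with vertex groups $\pi_1(M)$ and $t_0^{-1}\pi_1(M)t_0$, and with two edges carrying $\pi_1(C_i)$ for each $i$. Topologically, the double cover of $FM$ is $DM$ with a second tube (cusp cross-section $\times[0,1]$) joining each pair of corresponding boundary tori. Each extra edge is an HNN extension whose relation already holds in $H_1(DM)$. Hence $b_1(\ker\phi)=b_1(DM)+n$, and the two groups are not even abstractly isomorphic.

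Concretely, take the map you intend, written consistently with the relations $t_ic_it_i^{-1}=c_i'$: the identity on $\pi_1(M)$, $m'\mapsto t_0^{-1}mt_0$, and $t_i\mapsto t_0^{-1}t_i$. Its image lies in the kernel of $\psi\colon\pi_1(FM)\to\mathbb{Z}$ defined by $\psi(\pi_1(M))=0$ and $\psi(t_i)=1$. So the image has infinite index and misses, for example, $t_0^2\in\ker\phi$. This agrees with the paper, which describes $\pi_1(DM)$ as an infinite-index subgroup of $\pi_1(FM)$. The appeal to Bass--Serre covering theory therefore does not prove injectivity. The point you flagged as the main obstacle (twisted edge maps) is not where the difficulty lies; the edge count is.

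Your setup is one observation away from a correct proof. The graph of groups for $DM$ is the connected subgraph of the double-cover graph of groups obtained by keeping one lift of each petal. This subgraph contains both vertices, so each omitted edge contributes an HNN extension, which contains its base group by Britton's lemma. Hence $\pi_1(DM)\hookrightarrow\ker\phi\le\pi_1(FM)$, via exactly the map above, and composing with your first step finishes the proof.

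This is essentially the paper's argument. The paper adds to the graph of $DM$ a single edge $e$ with edge group $\pi_1(M)$ and identity edge maps, and gets injectivity of $\pi_1(DM)$ from subgraph injectivity. It then takes $e$ as the spanning tree (equivalently, collapses $e$), which yields exactly the presentation $\langle\pi_1(M),t_0,\ldots,t_{n-1}\mid[t_i,c_i]\rangle$ of $\pi_1(FM)$. Both routes give essentially the same embedding; the paper's avoids covering theory altogether.
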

\begin{proof}
We know that $\pi_1(DM)$ has a surface subgroup, so we will construct an injective map $f:\pi_1(DM) \hookrightarrow \pi_1(FM)$. Refer to Figure~\ref{fig:graph.isomorphisms} for a graphical construction of this function. 

Let $(Y_0,G_0)$ be the graph of groups that we used to compute the fundamental group of $DM$. As previously mentioned this graph has a natural presentation of its fundamental group computed using the edge $\{t_0\}$ as the minimal spanning tree. This presentation is 
$\pi_1(Y_0,G_0) = (\pi_1(M)*_{\pi_1(C_0)}\pi_1(M))*\langle t_1,\ldots, t_{n-1}\rangle / t_ic_it_i^{-1}c_i'^{-1}$ 
for all $0<i<n$, $c_i \in \pi_1(C_i)$, and $c_i' \in \pi_1(C_i')$ where the $t_i$ are the stable letters. 

We can form a graph with same fundamental group as $FM$ by adding a new edge $e$ to $(Y_0,G_0)$, where the edge group of $e$ is $\pi_1(M)$, and the edge maps are both the identity function. This gives us the middle graph in Figure~\ref{fig:graph.isomorphisms}. Call this graph $(Y_1,G_1)$. Now $(Y_0,G_0)$ is a connected subgraph of $(Y_1,G_1)$, so there is a natural faithful representation of its fundamental group in $\pi_1(Y_1,G_1)$ induced by the inclusion map $(Y_0,G_0) \hookrightarrow (Y_1,G_1)$. In this case, the natural representation can be described using the presentation of $\pi_1(Y_1,G_1)$ where $t_0$ is the minimal spanning tree. Then $\pi_1(Y_1,G_1) = (\pi_1(M)*_{\pi_1(C_0)}\pi_1(M)) *\langle t_1,\ldots, t_{n-1}\rangle / (t_ic_it_i^{-1}c_i'^{-1})* e/eme^{-1}m'$ for all $m \in \pi_1(M)$ and $m' \in \pi_1(M)$, so the inclusion of $\pi_1(Y_0,G_0)$ as above into $\pi_1(Y_1,G_1)$ is a faithful representation. 

If we want to see that $\pi_1(Y_1,G_1)$ is isomorphic to $\pi_1(\fold)$, then we can instead let the edge $e$ make up our minimal spanning tree. This gives us $\pi_1(Y_1,G_1) = \langle \pi_1(M),t_0,t_1, \ldots, t_{n-1} \hspace{3pt} | \hspace{3pt} [t_i,c_i] \rangle$, which still has a subgroup that's isomorphic to $\pi_1(Y_0,G_0)$. Now we simply contract $e$ in $(Y_1,G_1)$, which identifies the two copies of $\pi_1(M)$, giving us the graph used to compute $\pi_1(\fold)$. This is the third graph in Figure~\ref{fig:graph.isomorphisms}. This graph has empty minimal spanning tree, so it simply has fundamental group $\pi_1(\fold) = \langle \pi_1(M),t_0,t_1, \ldots, t_{n-1} \hspace{3pt} | \hspace{3pt} [t_i,c_i] \rangle$ which is equal to the second presentation of $\pi_1(Y_1,G_1)$. 

Thus, there is an injective function $f:\pi_1(DM) = \pi_1(Y_0,G_0) \hookrightarrow \pi_1(Y_1,G_1) = \pi_1(\fold)$, given by the change in choice of minimal spanning tree for $\pi_1(Y_1,G_1)$. 
\end{proof}

\begin{figure}[!ht] 
\centering
\begin{tikzpicture}[-,auto,node distance=80pt, thick,main node/.style={circle,draw,font=\sffamily,}]

  \node[main node] (1) {$\pi_1(M)$};
  \node[main node] (2) [right of=1] {$\pi_1(M)$};
  \node (3) [right of=1, node distance=40pt] {$\vdots$};

  \path[every node/.style={font=\sffamily\small}]
    (1) edge [bend left = 45] node[above] {$t_0$} node[below] {$\pi_1(C_{0})$}    (2)
    edge [bend right = 45] node[above] {$t_{n-1}$} node[below] {$\pi_1(C_{n-1})$} (2);

  \node (9) [right of=2, node distance=40pt] {$\hookrightarrow$};

  \node[main node] (4)  [right of=2] {$\pi_1(M)$};
  \node[main node] (5) [right of=4] {$\pi_1(M)$};
  \node (6) [right of=4, node distance=40pt] {$\vdots$};

  \path[every node/.style={font=\sffamily\small}]
    (4) edge [bend left = 45] node[above] {$t_0$} node[below] {$\pi_1(C_{0})$}    (5)
    edge [bend right = 45] node[above] {$t_{n-1}$} node[below] {$\pi_1(C_{n-1}) $} (5)
    (4) edge [bend left = 95] node[above left] {$e$} node[above right] {$\pi_1(M)$}    (5)
    ;

\node (9) [right of=5, node distance=40pt] {$\xrightarrow{\sim}$};

  \node[main node] (7) [right of=5] {$\pi_1(M)$};
  \node (8) [right of=7, node distance=30pt] {$\vdots$};

  \path[every node/.style={font=\sffamily\small}]
    (7) edge [out=15,in=45,looseness=8] node[above] {$t_0 \hspace{6pt} \pi_1(C_{0})$} (7);
  \path[every node/.style={font=\sffamily\small}] 
    (7) edge [out=335,in=305,looseness=8] node[below] {$t_0 \hspace{6pt} \pi_1(C_{n-1})$} (7);

\end{tikzpicture}
\caption{An equivalent way to construct the folded double that gives us an injective map on fundamental groups. First add an edge $e$ to the double such that $e$ has edge group $\pi_1(M)$ and edge maps that are simply the identity on both copies of $\pi_1(M)$. Then contract that edge to identify the copies of $\pi_1(M)$.}
\label{fig:graph.isomorphisms}
\end{figure}

\subsection{Representations in the Fundamental Groups of 4-Manifolds and Orbifolds}
Now that we have abstractly described the folded double of a cusped, arithmetic, hyperbolic 3-manifold $M$, we construct a representation of $\pi_1(FM)$ into the fundamental group, $\Gamma$, of a cusped, arithmetic, hyperbolic 4-manifold $X$ and use its action on $\mathbb{H}^4$ to show that this representation is faithful.

We first define some of the concepts that we use to characterize cusps and their associated subgroups in $\pi_1(M)$. Let $C$ be a cusp of $M$. A \emph{cusp point} associated to $C$ is a point on the boundary of $\h^n$ that is approached by the preimage of $C$ in some fundamental domain for the action of $\pi_1(M)$ on $\h^n$. We call the stabilizer of a cusp point in $\pi_1(M)$ of a \emph{cusp subgroup}. $M$ is orientable, so its cusps have flat 2-torus cross-sections. Thus, each of these subgroups is abstractly isomorphic to $\mathbb{Z}^2$ and generated by two parabolic isometries.

Let $S$ be a cross-section of $C$, so that $C$ is homeomorphic to $(0,\infty) \times S$. As $C$ has finite volume, the diameter of the slice $\{t\}\times S$ decreases exponentially as $t\to\infty$ in the cusp $C$. We use the notion of cusp depth to keep track of relative position within each cusp.

\begin{definition}
    The \emph{cusp depth} of a cusp cross-section with $(n-1)$-dimensional volume $V$ is $\frac{1}{V}$.  
\end{definition}

As we move towards infinity along the real axis of the cusp, the cusp depth goes to infinity as well. Thus, a larger cusp depth corresponds to taking a smaller cross-section. We can use this notion of relative location within the cusp to determine the parabolic isometries that will define our representation of $\pi_1(\fold)$.

In order to build our desired representation we first realize the cusp subgroups of $\pi_1(M)$ as subgroups of the cusp subgroups of $\Gamma$ with certain nice geometric properties. First, let $G$ be the image of $\varphi$, where $\varphi: \pi_1(M) \hookrightarrow \Gamma$ is the representation described in Lemma~\ref{lemma:groups.and.qf}. $\Gamma$ acts on $\h^4$ with $G$ preserving a copy of $\h^3 \subset \h^4$ as $G$ is contained in a copy of $\SOfZ$ for $f$ a form of signature $(3,1)$. We denote this copy of $\h^3$ by $H_G$.

Let $P_0,\ldots, P_{n-1}$ be the collection of cusp subgroups of $G$. Each cusp subgroup is a maximal parabolic subgroup in $G$ isomorphic to $\mathbb{Z}^2$, generated by two parabolic elements that preserve a family of three-dimensional horospheres in $\h^4$ intersecting the boundary of $H_G$. For each $P_i$, $i \in \{0,\ldots,n-1\}$ choose one of these horospheres and denote it $h_i^3$. The intersection of this horosphere and $H_G$ is a two-dimensional horosphere denoted $h_i^2$. 

If we identify $M$ with its image in $X$ under the map induced by $\varphi$, then $M$ is a totally geodesic immersed submanifold. By passing to finite covers of $M$ and $X$ we can ensure that $M$ is embedded in $X$, $X$ has 3-torus cusps, and each cusp of $M$ lies in some distinct cusp of $X$ \cite{MRS}. That is, the cusps do not collide at infinity. Then for each cusp $C_i$ of $M$ there is some parabolic element $p_i$ in $\Gamma$ such that $\langle P_i, p_i \rangle$ is a maximal cusp subgroup for the cusp of $X$ containing $C_i$. Hence, $p_i$ commutes with $P_i$ and preserves $h_i^3$. However, instead of preserving $H_G$, $p_i$ translates $H_G$ to some new copy of $\h^3$. Thus, $p_i$ moves $h_i^2$ a fixed positive distance off of itself. As we take higher powers of $p_i$, this distance goes to infinity. We can now use the action of these parabolic elements to build a representation of $\pi_1(\fold)$ in $\Gamma$ and prove the following.

\begin{figure}[!ht] 
\centering
\begin{tikzpicture}

            \draw[thin] (6.5,4.5) node[left]{$\h^4$};

            \draw[color = gray, thin] (2.75,2.25) node[left]{$H_G \cong \h^3$};
            \draw[color = gray, thin] (4.4,1.3)--(.6,-1.3);
            \draw[color = gray, thin, ->] (4.4,1.3) -- (4.4,5.3);
            \draw[color = gray, thin, ->] (.6,-1.3) -- (.6,3.3);

            \draw[color = gray, thin] (3.2,.5) arc (130:-25:1.54);
            \draw[color = gray, thin, rotate=-35] (3.85,2.2) ellipse (1.5 and 0.75);

            \draw[color=blue] (3.2,1) circle (.5cm);
            \tkzDefPoint(3.2,1.5){top2}
            \tkzDefPoint(3.2,.5){boundary2}
            \tkzDefPoint(3.4,1){front2}
            \tkzDefPoint(3,1){back2}
            \arcThroughThreePoints[thin, dashed, color=red]{boundary2}{front2}{top2};
            \arcThroughThreePoints[thin, color=red]{top2}{back2}{boundary2};
            \tkzDrawPoints[color=blue](boundary2)
            \draw[color = blue] (4.1,1.2) node[above left]{$h_i^3$};
            \draw[color = red] (3.2,1) node[]{$h_i^2$};

            \tkzDefPoint(3.64,.8){Ptop}
            \tkzDefPoint(3.4,.7){Pback}
            \tkzDefPoint(3.55,.7){Pfront}
            \arcThroughThreePoints[thin, color=red]{Ptop}{Pback}{boundary2};
            \arcThroughThreePoints[thin, dashed, color=red]{boundary2}{Pfront}{Ptop};

            \draw[color = red] (3.75,0.15) node[above]{\tiny $p_i(h_i^2)$};

            \draw[dashed, ->][color=gray] (1.9,-0.5) -- (3.3,-0.5);
            \draw[color = gray] (2.7,-1) node[above left]{$p_{i}$};

\end{tikzpicture}
\caption{A diagram showing how the parabolic element $p_i$ translates the plane $H_G$. The red curve in $p_i(H_G)$ is $p_i(h_i^2)$. Note that it remains in $h^3_i$.}
\label{fig:parabolic}
\end{figure}

\begin{theorem}\label{main.theorem}
Let $X$, $M$, and $\Gamma$ be as above. Then $\Gamma$ contains a subgroup that is isomorphic to $\pi_1(\fold)$.
\end{theorem}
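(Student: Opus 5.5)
We will define a representation $\rho:\pi_1(\fold)\to\Gamma$ by setting $\rho|_{\pi_1(M)}=\varphi$ and $\rho(t_i)=p_i^{k}$ for a single large integer $k$, with $p_i$ the parabolics chosen above. This is a well-defined homomorphism because the only relations in the presentation
\[
\pi_1(\fold)=\langle \pi_1(M),t_0,\ldots,t_{n-1}\mid [t_i,c_i],\ c_i\in\pi_1(C_i)\rangle
\]
are $[t_i,c_i]=1$, and $p_i$ commutes with $P_i=\varphi(\pi_1(C_i))$ since $\langle P_i,p_i\rangle$ is an abelian maximal cusp subgroup of $\Gamma$. The content of the theorem is that $\rho$ is injective once $k$ is large. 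The image then lies in $\Gamma$ and is the desired subgroup.

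To prove injectivity we will run a ping-pong, or combination-theorem, argument in $\h^4$ in the spirit of Maskit's combination theorems, or the Baker--Cooper criterion for combining along cusps. Fix disjoint precisely invariant horoballs $B_i$ at the cusp points of $P_i$, bounded by the horospheres $h_i^3$, deep enough that their $\Gamma$-translates are disjoint. This uses that, after the passage to covers via \cite{MRS}, the cusps of $M$ lie in distinct cusps of $X$. Let $N$ be a convex, $G$-invariant neighbourhood of $H_G$ with the horoballs removed; then $N/G$ is the truncated copy of $M$. Within $\partial B_i$, a Euclidean $3$-space on which $\langle P_i,p_i\rangle\cong\mathbb{Z}^3$ acts by translations, $H_G\cap\partial B_i$ is the plane $h_i^2$, and $p_i^k$ pushes it off itself by a Euclidean distance growing linearly in $k$. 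For large $k$ we therefore get a wedge-shaped region $W_i\subset B_i$, together with the half-spaces of $\h^4$ it spans, containing all translates $p_i^{mk}(H_G)$ with $m\neq 0$ and disjoint from a neighbourhood of $H_G$.

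Next, take a reduced word $g_0 t_{i_1}^{\epsilon_1} g_1 \cdots t_{i_r}^{\epsilon_r} g_r$ in the graph-of-groups normal form, where $g_j\in\pi_1(M)$, and whenever $i_j=i_{j+1}$ with $\epsilon_j=-\epsilon_{j+1}$ the middle letter $g_j\notin\pi_1(C_{i_j})$. We apply its image to a basepoint in $H_G$ and track, letter by letter, the nested sequence of translates of $H_G$ the path visits. The key geometric input is this: an element $g\in G\setminus P_i$ sends the horoball $B_i$ to a horoball disjoint from it, and $p_i^{\pm k}$ moves everything outside $B_i$'s shadow on $H_G$ into the far side of a separating hyperplane. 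The nesting then shows that the image of a nontrivial reduced word is nontrivial. Equivalently, we can build the $\pi_1(\fold)$-tree of translates of $N$ glued along horoball pieces and show that it maps injectively to $\h^4$, as a local-to-global statement.

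The main obstacle is the case $i_j=i_{j+1}$. In that case consecutive stable letters act within the same parabolic subgroup, so we must guarantee that $p_i^{\epsilon k}\varphi(c)p_i^{\epsilon' k}$ with $c\notin P_i$ still moves points far. Here we will use that $G$ stabilizes $H_G$ while $p_i^k H_G\cap H_G=\emptyset$ for $k\neq 0$. We will also check that two distinct cusp points of $G$ never share a horoball orbit in $\Gamma$. Then, after choosing $k$ larger than an explicit bound depending on the finitely many cusp tori, the separation is quantitative and the ping-pong closes.
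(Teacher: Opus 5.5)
Your proposal is correct in outline, but it proves faithfulness by a different mechanism from the paper.

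\textbf{The paper's route.} The paper sets up no disjoint regions. It uses the syllable normal form $m_1t_{r_1}^{k_1}m_2\cdots m_\ell$, with powers of the same $t_i$ merged across cusp elements so that Britton's lemma applies. By induction on $\ell$ it builds a broken geodesic from $x\in H_G$ to $\varrho(\omega)x$. This path alternates between segments in successive translates of $H_G$, joining consecutive cusp points, and chords inside the horoballs at those points. High powers of the $p_i$ make every segment longer than a fixed $D$, and perpendicularity to the horospheres makes every angle obtuse. The local-to-global lemma for such broken geodesics then gives $\varrho(\omega)x\neq x$.

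\textbf{Your route.} You read $\pi_1(\fold)$ as the tree of groups $G*_{P_0}(P_0\times\langle t_0\rangle)*_{P_1}(P_1\times\langle t_1\rangle)\cdots$ and play Klein--Maskit ping-pong with disjoint half-space regions. This avoids quasi-geodesic constants and the induction on length, and it works with one uniform power $k$. In its combination-theorem form it also yields geometric finiteness of the image, which the paper's argument does not give. The paper's route is more hands-on, and the paper reuses it essentially verbatim in higher dimensions.

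\textbf{The missing estimate.} Your sketch leaves implicit the estimate that makes a single $k$ work.
\begin{enumerate}
\item Normalize $y_i=\infty$, $B_i=\{t>1\}$ and $H_G=\{x_3=0\}$, and set $\Omega_i^{\pm}=\{\pm x_3>c\}$ with $c\ge 1$.
\item Take $g\in G$ with $gy_j\neq y_i$, read from $y_j$-normalized to $y_i$-normalized coordinates. It satisfies $g^{-1}(\infty)\in\partial H_G$. Its isometric sphere has radius at most $1$, because $gB_j\cap B_i=\emptyset$.
\item Hence $g\Omega_j^{\pm}$ lies in the slab $\{|x_3|\le 1/c\}$, which is disjoint from $\Omega_i^{\pm}$.
\item Let $v^{(i)}$ be the translation vector of $p_i$ in $y_i$-normalized coordinates. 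Any $k$ with $k|v^{(i)}_3|>c+1/c$ for every $i$ then closes the ping-pong.
\end{enumerate}

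\textbf{Two small corrections.}
\begin{enumerate}
\item If you go letter by letter with $t_i^{\pm1}$, you must keep $\Omega_i^{+}$ and $\Omega_i^{-}$ separate, since $p_i^{k}$ does not map $\Omega_i^{-}$ into $\Omega_i$. Alternatively, work with whole syllables in $(P_i\times\langle p_i^k\rangle)\setminus P_i$.
\item Your claim that distinct cusp points of $G$ never share a $\Gamma$-orbit of horoballs is false as written: $y_i$ and $gy_i$ with $g\in G\setminus P_i$ do. It is also unnecessary. The ping-pong only needs the horoballs centred at the $G$-orbits of the $y_i$ to be pairwise disjoint, and this can be arranged using $G$ alone.
\end{enumerate}
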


We first define a homomorphism $\varrho:\pi_1(\fold)\to\Gamma$ via where it sends generators of $\pi_1(\fold)$:
\[
\varrho(x):=\begin{cases}
\varphi(x) & x\in\pi_1(M)\\
p_i & x=t_i.
\end{cases}
\]
Observe that as the parabolic element $p_i$ commutes with the entire cusp subgroup $P_i$ in $\operatorname{SO}^+(4,1)$, $\varrho$ defines a homomorphism of $\pi_1(\fold)$. We will show that, after possibly raising each $p_i$ to a higher power, $\varrho$ is faithful. 

To show faithfulness, for every nontrivial word $\omega$ in the generators of $\pi_1(\fold)$, we will construct some $x\in\h^4$ with the property that $\varrho(\omega)x\neq x$. The argument rests on a ping-pong type construction of a broken (i.e.\ piecewise) geodesic in $\h^4$ through $x$ and $\varrho(\omega)x$ with certain nice geometric properties.

First, for a broken geodesic $\gamma$ comprised of geodesic segments $\gamma_1,\gamma_2,\gamma_3,\ldots$ that all meet at angles greater than $\frac{\pi}{2}$, there is a constant $D_0$ such that if each segment of $\gamma$ is of length at least $D_0$, then $\gamma$ is quasi-geodesic. Moreover, the quasi-geodesic constants for $\gamma$ can be chosen independently of the length of each segment (see \cite{B+H}*{p. 407}). For a quasi-geodesic, there is some constant $r$ determined only by the quasi-constants such that it cannot return to a point after a distance greater than $r$ away from that point. Fix some $D>\max(2r,D_0)$. We will find an $x\in\h^4$ and a broken geodesic, $\gamma_\omega$, of the above form comprised of geodesic segments of length at least $D$ such that $x$ and $\rho(\omega)x$ are contained in distinct geodesic segments. As a piecewise geodesic can only self intersect along non-consecutive segments, if $\gamma_\omega$ contained a loop, then some geodesic segments $\gamma_i$ and $\gamma_j$ intersect with $j>i+1$. This means one must traverse at least the length of $\gamma_{i+1}$ to get from $\gamma_i$ to $\gamma_j$, and as $D>2r$, this is impossible. In particular, $\gamma_\omega$ does not self intersect and so $x\neq\rho(\omega)x$.

Building the broken geodesic of the above form rests on a normal form for elements of $\pi_1(\fold)$ coming from its graph of groups description. Note that from the presentation of $\pi_1(\fold)$, there exists some $m_i\in\pi_1(M)$ (possibly the identity), non-zero $k_i\in\mathbb{Z}$, and $r_i\in\{0,\ldots,n-1\}$ such that
\[
\omega = m_1 t_{r_1}^{k_1}m_2 t_{r_2}^{k_2}\ldots m_\ell.
\]
We shall say that a word of this form has length $2\ell-1$ (note that this is often referred to as the syllable length of the word). Now, if $c_{r_i}$ is an element of the cusp subgroup $P_{r_i}$, then any subword of the form $t_{r_i}^{k_{i-1}}c_{r_i}t_{r_i}^{k_i}$ in $\omega$ can be shortened to $t_{r_i}^{k_{i-1}+k_i}c_{r_i}$. One can perform this reduction multiple times. Then after a finite number of iterations, we can assume that $\omega$ is of the above form and has no subwords of the form $t_{r_i}^{k_{i-1}}c_{r_i}t_{r_i}^{k_i}$. By Britton's Lemma \cite{L+S}, we have that $\omega$ in the above reduced form is the identity in $\pi_1(\fold)$ if and only if $\ell=1$, $m_1=1$ and $k_1=0$. This normal form for elements of $\pi_1(\fold)$ gives natural way to prove Theorem~\ref{main.theorem} by inducting on $\ell$.

\begin{proof}[Proof of Theorem~\ref{main.theorem}]
Let $\omega=m_1 t_{r_1}^{k_1}m_2 t_{r_2}^{k_2}\ldots m_\ell\in\pi_1(\fold)$ be a nontrivial reduced word as above. We will construct the quasi-geodesic $\gamma_\omega$ by inducting on $\ell$.

First, as it is instructive to our argument, we can directly show that $\varrho$ is injective on words with $\ell=1$ or $2$. For $\ell=1$, this directly follows from injectivity of $\varphi$, so that, identifying $\pi_1(M)$ with $\varphi(\pi_1(M))$, we have that if $1\neq \omega=m_1 t_{r_1}^{k_1}$, then $\varrho(\omega)= m_1p_{r_1}^{k_1}$ and this is nontrivial as $p_{r_1}\notin\pi_1(M)$. When $\ell=2$, we rewrite $\omega= m_1 t_{r_1}^{k_1}m_2$ as
\[
\omega = (m_1 p_{r_1}^{k_1})m_2(m_1 p_{r_1}^{k_1})^{-1}\cdot (m_1)p_{r_1}^{k_1}(m_1)^{-1}\cdot m_1.
\]
Now, fix an $x\in H_G$ and consider the action of $\varrho(\omega)$ on $x$ as we iterate through the three subwords above from right to left. First, $m_1$ acts on $x$ (possibly fixing it), sending it to a point in $H_G$. The subword $(m_1)p_{r_1}^{k_1}(m_1)^{-1}$ is a parabolic isometry fixing the point $m_1 y_{r_1}\in\partial H_G$, where $y_{r_1}$ is the fixed point of $p_{r_1}$. Hence, $(m_1)p_{r_1}^{k_1}(m_1)^{-1}$ moves $H_G$ off of itself to a new copy of $\h^3$ inside $\h^4$, meeting $H_G$ only at the boundary point $m_1 y_{r_1}$, see Figure~\ref{fig:parabolic}. Finally, the subword $(m_1 p_{r_1}^{k_1})m_2(m_1 p_{r_1}^{k_1})^{-1}$ restricts to an isometry of $(m_1)p_{r_1}^{k_1}H_G=(m_1)p_{r_1}^{k_1}(m_1^{-1})H_G$. Therefore, following $x$ under the action of each subword of $\varrho(\omega)$, we see that $x\in H_G$ whereas $\varrho(\omega)x\in (m_1)p_{r_1}^{k_1}H_G$, and as these two copies of $\h^3$ inside $\h^4$ intersect only at the boundary point $m_1 y_{r_1}$, this is enough to see that $\varrho(\omega)x\neq x$.
 
To show $\varrho$ is injective on $\omega$ with $\ell>2$, we will use an argument similar to the above to construct the broken geodesic $\gamma_\omega$ comprised of geodesic segments of length at least $D$ meeting at angles greater than $\frac{\pi}{2}$ through $x$ and $\varrho(\omega)x$ for an arbitrary $x\in H_G$. To do so, it is first necessary to modify our representation $\varrho$ slightly. If we lift the thin part of $X$ to $\h^4$ then we get a family of $3$-dimensional horoballs, each of which is centered at a boundary point or cusp point of $X$. These cusp points are exactly the fixed points of parabolic elements in $X$. $X$ has a finite number of cusps, so if we choose an appropriately large cusp depth for each cusp in the thin part of $X$, we can ensure that these horoballs are an arbitrarily large distance apart from one another. Let $\mathcal{B}$ be a family of horoballs with large enough cusp depth such that each horoball is at least a distance $D$ from every other horoball. Let $\mathcal{S}$ be the collection of horospheres bounding the horoballs in $\mathcal{B}$. Finally, for each parabolic generator $p_i$, with fixed point $y_i$, let $s_i$ be a horosphere in $\mathcal{S}$ centered at $y_i$. Replace $p_i$ with a sufficiently high power $p_i^{j_i}$ such that $p_i^{j_i}$ moves $s_i\cap H_G$ a distance at least $D$ off of itself. We then modify the definition of $\varrho$ to instead send $t_i$ to $p_i^{j_i}$, rather than $p_i$. Note that replacing the $p_i$ in this way does not affect the previous part of this proof.

We will show that there is a broken geodesic $\gamma_\omega$ through $x$ and $\varrho(\omega)x$ that satisfies the following properties:
\begin{enumerate} 
    \item $\gamma_\omega$ has segments of length at least $D$ meeting at angles greater than $\pi/2$
    \item $\gamma_\omega$ consists of $2\ell-1$ segments
    \item The third to last segment of $\gamma_\omega$, which is $\gamma_{2\ell-3}$, is contained in the copy of $\h^3$ preserved by $(m_1 \ldots p_{r_{\ell-2}}^{k_{\ell-2}} ) m_{\ell-1} (m_1 \ldots p_{r_{\ell-2}}^{k_{\ell-2}})^{-1}$ and it intersects the horoball in $\mathcal{B}$ preserved by $(m_1 \ldots m_{\ell-1}) p_{r_{\ell-1}}^{k_{\ell-1}} (m_1 \ldots m_{\ell-1})^{-1} $ at a right angle. 
\end{enumerate}
We use $\ell=2$ as the base step for the induction, in which case, the third condition becomes ``$\gamma_{1}$, is contained in the hyperbolic plane preserved by $m_1$ and it intersects the horoball in $\mathcal{B}$ preserved by $(m_1) p_{r_1}^{k_1} (m_1)^{-1}$ at a right angle.''

To form this broken geodesic consider the $3$-dimensional horosphere $h^3$ in $\mathcal{S}$ centered at $m_1 (y_{r_1})$. Take the geodesic containing $x$ that approaches the boundary point $m_1 (y_{r_1})$ and the geodesic containing $\varrho(\omega)x$ that approaches $m_1 (y_{r_1})$. Truncate these geodesics at the point where they intersect $h^3$. Note that they intersect $h^3$ in a right angle because they contain the center of this horosphere. This gives two geodesic rays, $\gamma_1$ lying in $H_G$ and $\gamma_3$ lying in $(m_1) p_{r_1}^{k_1}H_g$. Now take the geodesic segment that connects the terminal point of $\gamma_1$ and the starting point of $\gamma_3$. This segment is $\gamma_2$ and lies in the horoball in $\mathcal{B}$ that $h^3$ bounds.

This broken geodesic satisfies properties 2 and 3 by construction. To show that it satisfies the conditions the first property note that $\gamma_1$ and $\gamma_3$ are infinitely long. Also, $\gamma_2$ connects the two horospheres $h^3 \cap H_G$ and $h^3 \cap (m_1) p_{r_1}^{k_1} (m_1)^{-1} H_g$ to each other. By how the powers $p_{r_i}^{j_i}$ were chosen, these sets are at least $D$ away from each other so $\gamma_2$ has length at least $D$. Additionally, $\gamma_2$ lies entirely in the horoball bounded by $h^3$, and $\gamma_1$ and $\gamma_3$ meet the horoball at a right angle, so $\gamma_2$ meets $\gamma_1$ and $\gamma_3$ at an angle greater than $\frac{\pi}{2}$, see Figure~\ref{fig:basegeodesic}. Thus, we have our desired broken geodesic. 

\begin{figure}[h!] 
\centering
\begin{tikzpicture}

            \draw[thin] (-4,5) node[left]{$\h^4$};

            \draw[color = gray, thin] (-.6,1.3)--(-4.4,-1.3);
            \draw[color = gray, thin, ->] (-4.4,-1.3) -- (-4.4,3.3);
            \draw[color = gray, thin, ->] (-.6,1.3) -- (-.6,5.3);
            \draw[color = gray, thin] (-3.5,1.8) node[left]{$H_G$};

            \draw[color = gray, thin] (4.4,1.3)--(.6,-1.3);
            \draw[color = gray, thin, ->] (4.4,1.3) -- (4.4,5.3);
            \draw[color = gray, thin, ->] (.6,-1.3) -- (.6,3.3);

            \tkzDefPoint(0,5.5){infty}
            \tkzDrawPoints[color = blue](infty)
            \tkzLabelPoint[above right, color = blue](infty){$\infty = m_1(y_{r_1})$};
            \draw[color = blue] (-.6,4.5)--(-4.4,2.5);
            \draw[color = blue] (4.4,4.5)--(.6,2.5);
            \draw[color=blue] (4.4,4.5) -- (-.6,4.5);
            \draw[color=blue] (-4.4,2.5) -- (.6,2.5);
            \tkzDefPoint(1.3,2.9){h3}
            \tkzLabelPoint[above, color = blue](h3){$h^3$};

            \draw[color=blue] (-3.2,0) circle (.5cm);
            \tkzDefPoint(-3.2,.5){top}
            \tkzDefPoint(-3.2,-.5){boundary}
            \tkzDefPoint(-3.4,0){front}
            \tkzDefPoint(-3.0,0){back}
            \arcThroughThreePoints[thin, color=blue]{top}{front}{boundary};
            \arcThroughThreePoints[thin, dashed,color=blue]{boundary}{back}{top};
            \tkzDrawPoints[color=blue](boundary)
            \tkzLabelPoint[below, color = blue](boundary){$y_{r_1}$};

            \draw[dashed, ->][color=gray] (-2.3,.75) -- (2.7,.75);
            \draw[color = gray] (.75,0) node[above left]{$(m_1)p_{r_1}^{k_1}(m_1)^{-1}$};

            \tkzDefPoint(-1.5,1.75){x}
            \tkzDrawPoints[color=red](x)
            \tkzLabelPoint[right](x){$x$};
            \draw[->, color = red] (-1.49,4.02)--(-1.49,1.3);
            \draw[->, color = red] (3.2,3.86)--(3.2,1.0);
            \draw[color = red] (-1.5,4.02) arc (140:36.5:3);

            \tkzDefPoint(-2.05,3.2){gamma1}
            \tkzLabelPoint[right, color = red](gamma1){$\gamma_1$};

            \tkzDefPoint(0.5,4.8){gamma2}
            \tkzLabelPoint[right, color = red](gamma2){$\gamma_2$};

            \tkzDefPoint(3.2,2.8){gamma3}
            \tkzLabelPoint[right, color = red](gamma3){$\gamma_3$};

            \tkzDefPoint(3.2,1.5){wx}
            \tkzDrawPoints[color=red](wx)
            \tkzLabelPoint[right](wx){$\varrho(\omega(x))$};

\end{tikzpicture}
\caption{The broken geodesic in the base case. $h^3$ is the horosphere centered at infinity.}
\label{fig:basegeodesic}
\end{figure}

Finally, we induct and consider $\omega_1\in\pi_1(\fold)$ with $\varrho(\omega_1) = m_1  p_{r_1}^{k_1}  \ldots m_{\ell}  p_{r_{\ell}}^{k_{\ell}}  m_{\ell+1}$ of length $\ell+1>2$. Consider the subword $\varrho(\omega_0) = m_1  p_{r_1}^{k_1}  m_2  p_{r_2}^{k_2}\ldots m_{\ell-1}  p_{r_{\ell-1}}^{k_{\ell-1}}  m_{\ell}$ of length $\ell$. We have $\varrho(\omega_1) = \varrho(\omega_0) \cdot p_{r_{\ell}}^{k_{\ell}}m_{\ell+1}$ which we expand into three subwords similar to the $\ell=2$ argument:
\[
\varrho(\omega_1) = \varrho(\omega_0)  p_{r_{\ell}}^{k_{\ell}}   m_{\ell+1} (\varrho(\omega_0)  p_{r_{\ell}}^{k_{\ell}}  )^{-1} 
\cdot (\varrho(\omega_0)) p_{r_{\ell}}^{k_{\ell}} (\varrho(\omega_0))^{-1} 
\cdot \varrho(\omega_0).
\]
We then iterate through the subword from right to left. First, $\varrho(\omega_0)$ moves $H_G$ to $\varrho(\omega_0) H_G$. Next, $\varrho(\omega_0) p_{r_{\ell}}^{k_{\ell}} \varrho(\omega_0)^{-1} $ is a parabolic element fixing a point on the boundary of $\varrho(\omega_0) H_G$, and translates $\varrho(\omega_0) H_G$ to  $\varrho(\omega_0) p_{r_{\ell}}^{k_{\ell}} H_G$. Finally, $\varrho(\omega_0)  p_{r_{\ell}}^{k_{\ell}}   m_{\ell+1} (\varrho(\omega_0) p_{r_{\ell}}^{k_{\ell}})^{-1}$ is an isometry of $\varrho(\omega_0) p_{r_{\ell}}^{k_{\ell}} (H_G)$. Thus, if $x$ is in $H_G$, $\varrho(\omega_1)$ moves $x$ from $H_G$ to $\varrho(\omega_0) p_{r_{\ell}}^{k_{\ell}} H_G$. We use these copies of $\h^3$ to form our broken geodesic. 

Assume that $\varrho(\omega_0)$ has some broken geodesic $\gamma$ that satisfies the three conditions above. We form a new broken geodesic $\gamma'$ by removing the last two segments of $\gamma$ and adding 4 new segments. That is $\gamma'_j = \gamma_j$ for all $j\leq 2\ell-3$. Let $y$ be the fixed point of the element $(m_1 \ldots m_{\ell-1}) p_{r_{\ell-1}}^{k_{\ell-1}} (m_1 \ldots m_{\ell-1})^{-1}$ and $y'$ be the fixed point of $\varrho(\omega_0) p_{r_{\ell}}^{k_{\ell}} \varrho(\omega_0)^{-1}$. By the inductive hypothesis, the segment $\gamma_{2\ell-3}$ meets the horoball in $\mathcal{B}$ centered at $y$ at a right angle. Now take the geodesic that runs from $y$ to $y'$. Truncate it on both ends where it intersects the horoballs in $\mathcal{B}$ centered at $y$ and $y'$ and call this segment $\gamma'_{2\ell-1}$. Connect it to $\gamma_{2\ell-3}$ via the geodesic segment that lies in the horoball centered at $y$, which we call $\gamma'_{2\ell-2}$. Finally, consider the geodesic that runs from $y'$ to $\varrho(\omega_1(x))$. Truncate it where it meets the horoball centered at $y'$ to form $\gamma'_{2\ell+1}$. Connect it to $\gamma'_{2\ell-1}$ via the geodesic segment lying in the horoball centered at $y'$, which we call $\gamma'_{2\ell}$.

\begin{figure}[h!] 
\centering
\begin{tikzpicture}

            \draw[thin] (-4,5) node[left]{$\h^4$};

            \draw[color = gray, thin] (-.6,1.3)--(-4.4,-1.3);
            \draw[color = gray, thin, ->] (-4.4,-1.3) -- (-4.4,3.3);
            \draw[color = gray, thin, ->] (-.6,1.3) -- (-.6,5.3);
            \draw[color = gray, thin] (2.2,2) node[left]{$\varrho(\omega_0(H_G))$};

            \draw[color = gray, thin] (4.4,1.3)--(.6,-1.3);
            \draw[color = gray, thin, ->] (4.4,1.3) -- (4.4,5.3);
            \draw[color = gray, thin, ->] (.6,-1.3) -- (.6,3.3);

            \draw[color = gray, thin] (3.2,.5) arc (130:-25:1.54);
            \draw[color = gray, thin, rotate=-35] (3.85,2.2) ellipse (1.5 and 0.75);

            \tkzDefPoint(0,5.5){infty}
            \tkzDrawPoints[color = blue](infty)
            \tkzLabelPoint[above right, color = blue](infty){$\infty = y$};
            \draw[color = blue] (-.6,4.5)--(-4.4,2.5);
            \draw[color = blue] (4.4,4.5)--(.6,2.5);
            \draw[color=blue] (4.4,4.5) -- (-.6,4.5);
            \draw[color=blue] (-4.4,2.5) -- (.6,2.5);

            \draw[color=blue] (-3.2,0) circle (.5cm);
            \tkzDefPoint(-3.2,.5){top}
            \tkzDefPoint(-3.2,-.5){boundary}
            \tkzDefPoint(-3.4,0){front}
            \tkzDefPoint(-3.0,0){back}
            \arcThroughThreePoints[thin, color=blue]{top}{front}{boundary};
            \arcThroughThreePoints[thin, dashed,color=blue]{boundary}{back}{top};
            \tkzDrawPoints[color=blue](boundary)

            \draw[color=blue] (3.2,1) circle (.5cm);
            \tkzDefPoint(3.2,1.5){top2}
            \tkzDefPoint(3.2,.5){boundary2}
            \tkzDefPoint(3.4,1){front2}
            \tkzDefPoint(3,1){back2}
            \arcThroughThreePoints[thin, dashed, color=blue]{boundary2}{front2}{top2};
            \arcThroughThreePoints[thin, color=blue]{top2}{back2}{boundary2};
            \tkzDrawPoints[color=blue](boundary2)
            \tkzLabelPoint[below left, color = blue](boundary2){$y'$};

            \draw[dashed, ->][color=gray] (-2.5,0) -- (2.5,0);
            \draw[] (2.5,0) node[below left]{\footnotesize $(m_1 \ldots m_{2\ell-1}) p_{r_{\ell-1}}^{k_{\ell-1}} (m_1 \ldots m_{2\ell-1})^{-1}$};

            \draw[dashed, ->][color=gray] (4,2.2) -- (4.7,.9);
            \draw[] (4.35,1.5) node[right]{\footnotesize $(\varrho(\omega_0)) p_{r_{\ell}}^{k_{\ell}} (\varrho(\omega_0))^{-1}$};

            \tkzDefPoint(1.2,1){x}
            \tkzDrawPoints[](x)
            \tkzLabelPoint[right](x){$\varrho(\omega_0(x))$};

            \draw[color = red] (-3.2,.5)--(-3.2,3.145);
            \draw[color = red] (3.2,1.5)--(3.2,3.86);
            \draw[color = red] (-3.2,3.145) arc (140:53:4.68);
            \draw[color = red] (3.6,.75) arc (100:45:1.54);
            \draw[color = red] (3.6,.75) arc (-13:70:.65);

            \tkzDefPoint(4.95,.32){wx}
            \tkzDrawPoints[color=red](wx)
            \tkzLabelPoint[right](wx){$\varrho(\omega_1(x))$};


            \tkzDefPoint(-3.2,2){gamma1}
            \tkzLabelPoint[right, color = red](gamma1){$\gamma_{2\ell-3} = \gamma'_{2\ell-3}$};
            
            \tkzDefPoint(-.5,4.2){gamma2}
            \tkzLabelPoint[right, color = red](gamma2){$\gamma'_{2\ell-2}$};

            \tkzDefPoint(3.2,3.3){gamma3}
            \tkzLabelPoint[right, color = red](gamma3){$\gamma'_{2\ell-1}$};

            \tkzDefPoint(3.5,1.4){gamma4}
            \tkzLabelPoint[right, color = red](gamma4){$\gamma'_{2\ell}$};

            \tkzDefPoint(3.7,.35){gamma5}
            \tkzLabelPoint[right, color = red](gamma5){$\gamma'_{2\ell+1}$};

\end{tikzpicture}
\caption{The new segments added to $\gamma$ to form $\gamma'$. Note that $\varrho(\omega_0(x))$ is bypassed completely by $\gamma'$.}
\label{fig:qi.embedding}
\end{figure}

We have defined all the geodesic segments in $\gamma'$, and we claim that $\gamma_\omega=\gamma'$ is the desired broken geodesic. First, there are $2\ell+1$ total segments comprising $\gamma'$, so $\gamma'$ clearly satisfies the second required condition. Next, $y$ is the fixed point of $m_1 \ldots m_{\ell-1} p_{r_{\ell-1}}^{k_{\ell-1}} (m_1 \ldots m_{\ell-1})^{-1}$, a parabolic subword of $\varrho(\omega_0)$, and translates $m_1 \ldots m_{\ell-1}H_G$ to $m_1 \ldots m_{\ell-1} p_{r_{\ell-1}}^{k_{\ell-1}} H_G = \varrho(\omega_0)(H_G)$, so $y$ lies in $\varrho(\omega_0)(H_G)$. Also, $y'$ is the fixed point of $(\varrho(\omega_0)) p_{r_{\ell}}^{k_{\ell}} (\varrho(\omega_0))^{-1}$, so it lies in $\varrho(\omega_0)(H_G)$ as well. The third to last segment $\gamma'_{2\ell-1}$ is contained in the geodesic running from $y$ to $y'$. Thus, it is contained in $\varrho(\omega_0)H_G$, and intersects the horoball in $\mathcal{B}$ preserved by  $(\varrho(\omega_0)) p_{r_{\ell}}^{k_{\ell}} (\varrho(\omega_0))^{-1}$ at a right angle. Hence, $y'$ satisfies the third required condition.

We check the first condition for the last three segments comprising $\gamma'$ as in the base case. The segments $\gamma_{2\ell-3}'$, $\gamma_{2\ell-1}'$, and $\gamma_{2\ell+1}'$ meet horoballs at right angles and the segments in between them $\gamma_{2\ell-2}'$ and $\gamma_{2\ell}'$ lie inside these horoballs, so the angles in our broken geodesic are all greater than $\pi/2$. As for the length of the four new geodesics, $\gamma_{2\ell+1}'$ is infinitely long and $\gamma_{2\ell-1}'$ connects two horoballs in $\mathcal{B}$, so each of the segments is at least $D$ long. Call the horosphere in $\mathcal{S}$ centered at $y$ $h_y$ and the horosphere centered at $y'$ $h_{y'}$. The segment $\gamma_{2\ell-2}'$ connects the two horospheres $h_y \cap (m_1 \ldots m_{\ell-1}H_G)$ and $h_y \cap \varrho(\omega_0)H_G$ to each other. Similarly, $\gamma_{2\ell-2}'$ connects $h_y' \cap \varrho(\omega_0)H_G$ and $h_y' \cap (\varrho(\omega_0) p_{r_{\ell}}^{k_{\ell}} H_G)$ to each other. By choice of the sufficiently large powers of the $p_i$, these sets are at least $D$ away from each other so $\gamma_{2\ell-2}'$ and $\gamma_{2\ell}'$ have length at least $D$. 

It is important to be aware that if the last two parabolic subwords have the same fixed point, then $y=y'$ and $\gamma_{2\ell}'$ has length 0. However, because $\omega_1$ is in the normal form we described above, if $p_{r_{\ell-1}} = p_{r_{\ell}}$ the $m_\ell$ does not commute with $p_{r_{\ell}}$, and $p_{r_{\ell-1}}^{k_{\ell-1}} \neq m_\ell p_{r_{\ell}}^{k_{\ell}} (m_\ell)^{-1}$. Hence, $(m_1 \ldots m_{\ell-1}) p_{r_{\ell-1}}^{k_{\ell-1}} (m_1 \ldots m_{\ell-1})^{-1} \neq (\varrho(\omega_0)) p_{r_{\ell}}^{k_{\ell}} (\varrho(\omega_0))^{-1}$, and these parabolic subwords cannot translate around the same boundary point twice in a row. Thus, $\gamma'$ does not backtrack or self-intersect, and $x \neq \varrho(\omega(x))$. Then $\varrho(\omega)$ is not the identity and $\varrho$ is faithful. 
\end{proof}

Now that we have constructed our surface subgroup in $\Gamma$, we will show that this subgroup is Zariski dense by employing the following theorem:

    \begin{theorem}[Chen, Greenberg, 1974, \cite{CG}] \label{CG1974}
    Any subgroup of $\textnormal{SO}^+(n,1)$ that does not fix a point on the boundary of $\h^n$ and does not preserve a totally geodesic submanifold in $\h^n$ is either discrete or dense in $\textnormal{SO}^+(n,1)$.  
    \end{theorem}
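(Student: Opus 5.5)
The plan is to prove the Chen--Greenberg dichotomy by studying the Zariski closure and the closure in the real topology, and then reducing to the fact that connected Lie subgroups of $\textnormal{SO}^+(n,1)$ are tightly constrained. Let $H$ be the given subgroup and $\overline{H}$ its closure in the Lie group $\textnormal{SO}^+(n,1)$. Then $\overline{H}$ is a closed subgroup, hence a Lie subgroup, and its identity component $\overline{H}^0$ is a connected Lie subgroup normalized by $H$. If $\overline{H}^0$ is trivial, $\overline{H}$ is discrete, so $H$ is discrete. It therefore suffices to show that whenever $\overline{H}^0\neq 1$ we have $\overline{H}^0=\textnormal{SO}^+(n,1)$.

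The key step is a classification of the connected subgroups $L=\overline{H}^0$ according to their action on $\h^n\cup\partial\h^n$, carried out in cases, and in each case showing that the normalizing group $H$ is forced into one of the excluded configurations.
\begin{enumerate}
    \item If $L$ is not reductive, its unipotent radical is a nontrivial connected group of parabolic translations. These fix a unique common boundary point, and that point is then fixed by $H$, since $H$ normalizes the radical. This is excluded by hypothesis.
    \item If $L$ is reductive and noncompact, it is (up to a compact factor) conjugate into a product $\textnormal{SO}^+(k,1)\times K$. Its noncompact part preserves a unique minimal totally geodesic $\h^k$ (the convex hull of its limit set). In the degenerate case where it is a one-parameter hyperbolic group, it preserves an axis or fixes two boundary points. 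In either situation $H$ permutes this canonical object, preserving a totally geodesic submanifold. This is excluded unless $k=n$, in which case $L=\textnormal{SO}^+(n,1)$.
    \item If $L$ is compact, it has a nonempty fixed point set in $\h^n$, which is a totally geodesic submanifold. Since $H$ normalizes $L$, it preserves this set, which is again excluded. The case where $L$ is trivial is exactly discreteness.
\end{enumerate}

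The canonicity arguments are straightforward: fixed sets and minimal invariant totally geodesic subspaces are intrinsic to $L$, so anything normalizing $L$ preserves them. For this reason I would organize the proof around these intrinsic invariants.

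I expect the main obstacle to be making the reductive noncompact case rigorous. This requires showing that every connected noncompact reductive subgroup of $\textnormal{SO}^+(n,1)$ either is all of $\textnormal{SO}^+(n,1)$ or has a canonical proper invariant totally geodesic subspace or boundary fixed point. My first attempt would use the Karpelevich--Mostow theorem, which says that a reductive subgroup has a totally geodesic orbit. I would then take the smallest totally geodesic subspace containing that orbit, or alternatively the convex hull of the limit set of $L$. This gives an $H$-invariant object, and its being all of $\h^n$ forces $L$ to act cocompactly on a full-dimensional orbit, hence $L=\textnormal{SO}^+(n,1)$ by simplicity of the Lie algebra $\mathfrak{so}(n,1)$. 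A point fixed by $L$ in the boundary could be handled by case (1)'s reasoning.
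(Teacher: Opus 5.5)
The paper gives no proof of this statement. It is quoted from Chen--Greenberg and used only as a black box, in Lemma~\ref{lemma:max.Z.closed}, where it is applied to a Zariski closure. So your outline has to stand on its own, and it does. Your strategy is correct: pass to $L=\overline{H}^0$, which $H$ normalizes, and show that a nontrivial proper closed connected $L$ carries an intrinsic object that $H$ must then preserve (a unique fixed point at infinity, a fixed set in $\mathbb{H}^n$, or a round limit sphere).

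A common alternative uses the same classification only for \emph{algebraic} groups. Apply it to the identity component of the Zariski closure of $H$ to get Zariski density. Then $\operatorname{Lie}(\overline{H})$ is $\operatorname{Ad}(H)$-invariant, hence invariant under the Zariski closure, hence an ideal of the simple algebra $\mathfrak{so}(n,1)$, so it is $0$ or everything. That version gets a clean reductive versus unipotent-radical dichotomy for free. Yours must handle non-algebraic closed subgroups, which is where it needs tightening.

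First, the split between your cases (1) and (2) must be made on the Zariski closure $\mathbf{L}$ of $L$. Equivalently, ask whether $\mathfrak{l}=\operatorname{Lie}(L)$ is reductive \emph{in} $\mathfrak{so}(n,1)$, and in case (1) use the unipotent radical of $\mathbf{L}$, which $H$ also normalizes. Karpelevich--Mostow needs this hypothesis, and intrinsic reductivity of $L$ does not supply it. For $n\ge 4$, take the closed abelian ``screw-parabolic'' group $\{\exp(t(Y+N))\}$, with $Y$ elliptic, $N$ nilpotent and $[Y,N]=0$. It has no nontrivial connected unipotent subgroup, so under an intrinsic reading it lands in your case (2), yet it has no totally geodesic orbit. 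Your closing remark about boundary fixed points catches it, but that belongs in the case split itself.

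Second, in case (2), suppose a Cartan involution with base point $x_0$ preserves $\mathfrak{l}$. Then the orbit $Lx_0=\exp(\mathfrak{l}\cap\mathfrak{p})x_0$ is an entire totally geodesic $\mathbb{H}^k$ with $k=\dim(\mathfrak{l}\cap\mathfrak{p})$, so $\Lambda(L)=\partial\mathbb{H}^k$. The intrinsic, hence $H$-invariant, object is this limit set (the endpoint pair when $k=1$), not a chosen totally geodesic orbit. If $k=n$, then $\mathfrak{p}\subseteq\mathfrak{l}$, so $\mathfrak{l}\supseteq\mathfrak{p}+[\mathfrak{p},\mathfrak{p}]=\mathfrak{so}(n,1)$. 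This uses transitivity together with reductivity, not cocompactness: the solvable group $AN$ is simply transitive on $\mathbb{H}^n$.

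Finally, case (3) relies on reading a fixed point of $\mathbb{H}^n$ as a zero-dimensional invariant totally geodesic submanifold. That is the right reading; without it, $\operatorname{SO}(n)$ would contradict the statement.
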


We can use this theorem to prove a similar lemma about Zariski closures of subgroups of SO$^+(n,1)$.

\begin{lemma} \label{lemma:max.Z.closed} 

Let $\Lambda$ be a subgroup of $\textnormal{SO}^+(n,1)$ such that the Zariski closure $ZCL(\Lambda)$ contains a copy of $\textnormal{SO}^+(n-1,1)$. Suppose also that $ZCL(\Lambda)$ does not preserve any hyperplane in $\h^n$. Then $\Lambda$ is Zariski dense in $\textnormal{SO}^+(n,1)$. 

\end{lemma}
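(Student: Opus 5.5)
Set $H:=ZCL(\Lambda)$, a real algebraic subgroup of $\textnormal{SO}(n,1)$. The plan is to show that $H$ (or its identity component) is dense in $\textnormal{SO}^+(n,1)$ by applying Theorem~\ref{CG1974} to $H^+ := H\cap \textnormal{SO}^+(n,1)$. Since $H$ is Zariski closed, it is also closed in the analytic topology, so density forces $H^+=\textnormal{SO}^+(n,1)$. Being Zariski dense in $\textnormal{SO}^+(n,1)$ is exactly the statement that $ZCL(\Lambda)$ contains it, which is what we want.

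We verify the three hypotheses of Theorem~\ref{CG1974} for $H^+$ (or for $\Lambda$ itself).

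First, $H^+$ is not discrete. By hypothesis $H$ contains a copy $L\cong \textnormal{SO}^+(n-1,1)$, which is a connected positive-dimensional Lie group, and $L\subseteq H^+$.

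Second, $H^+$ fixes no point of $\partial\h^n$. The subgroup $L$ stabilizes a totally geodesic copy $P\cong\h^{n-1}$ and acts on $P$ as its full isometry group. Any point of $\partial \h^n$ fixed by $L$ would have to lie in $\partial P$; to see this, note that the orthogonal projection onto $P$ extended to the boundary would otherwise produce an $L$-fixed point of $P$, which is impossible since $L$ acts transitively on $P$. But $L$ fixes no point of $\partial P\cong S^{n-2}$ either, since it acts transitively there. So no boundary point is fixed by $L$, and hence none is fixed by $H^+$.

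Third, $H^+$ preserves no proper totally geodesic submanifold. Suppose $H^+$ preserves a totally geodesic $Q\subsetneq\h^n$, and let $Q'$ be the smallest totally geodesic subspace containing it; then $L$ preserves $Q'$ as well. A short linear-algebra argument applies here: $L$ acts on $\mathbb{R}^{n,1}=V\oplus V^\perp$, where $V$ is the $(n-1,1)$-subspace spanned by $P$ and $V^\perp$ is a positive line. Its invariant subspaces are only $0$, $V^\perp$, $V$, and $\mathbb{R}^{n,1}$, because $V$ is irreducible for $\textnormal{SO}^+(n-1,1)$ and not isomorphic to the trivial line. The only invariant subspaces of Lorentzian type are therefore $V$ and $\mathbb{R}^{n,1}$, so $Q'$ is either $P$ or $\h^n$. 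The case $Q'=P$ means $H^+$ preserves the hyperplane $P$, which the hypothesis excludes. We must also rule out the degenerate possibility that $H$ preserves $P$ while only $H^+$ was considered; we handle this by noting that hyperplane preservation by $H$ and by its finite-index subgroup $H^+$ can be compared using the uniqueness of $P$ as an $L$-invariant hyperplane. If $H^+$ preserved some hyperplane, that hyperplane is $L$-invariant, hence equals $P$. Then every $h\in H$ normalizes $H^+$ and sends $P$ to an $H^+$-invariant hyperplane, which is again $P$, contradicting the hypothesis.

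With the three hypotheses in hand, Theorem~\ref{CG1974} gives that $H^+$ is dense in $\textnormal{SO}^+(n,1)$. Since $H^+$ is closed, $H^+=\textnormal{SO}^+(n,1)$, and $\Lambda$ is Zariski dense.

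The main obstacle is the invariant-subspace step, together with being careful that Theorem~\ref{CG1974} is applied to a genuine subgroup of $\textnormal{SO}^+(n,1)$ and not to the possibly disconnected real points of $ZCL(\Lambda)$. The irreducibility argument above is what I would rely on; it is standard, but it needs $n-1\ge 2$, which holds in our setting.
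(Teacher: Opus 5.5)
Your proposal is correct and takes the same route as the paper. You apply Theorem~\ref{CG1974} to the Zariski closure, use the copy of $\textnormal{SO}^+(n-1,1)$ to rule out discreteness, fixed boundary points and invariant totally geodesic submanifolds of codimension at least $2$, invoke the hypothesis to exclude invariant hyperplanes, and conclude from closed-plus-dense. Your extra care supplies justifications the paper's proof only asserts: restricting to $H\cap\textnormal{SO}^+(n,1)$ and transferring the no-invariant-hyperplane hypothesis to it, the invariant-subspace computation for $V\oplus V^\perp$, and the observation that $n\ge 3$ is needed.
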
 

\begin{proof}
    
    Let $Z=ZCL(\Lambda)$. By supposition $Z$ contains a copy of SO$^+(n-1,1)$ that preserves some hyperplane $\h^{n-1}$. Immediately this tells us that $Z$ is not discrete, it does not preserve any point in the boundary of $\h^n$, and it does not preserve any totally geodesic submanifold with codimension 2 or greater. Also, by supposition $Z$ does not preserve any hyperplane in $\h^n$. Thus, $Z$ does not preserve any totally geodesic submanifold. Thus, by Theorem~\ref{CG1974}, $Z$ is dense in SO$^+(n,1)$.

    Note that, being Zariski closed implies being closed, so $Z$ is closed and dense. Hence, $Z$ is equal to SO$^+(n,1)$.

\end{proof}

\begin{corollary}\label{main.cor} 
   $\Gamma$ has a thin surface subgroup.
\end{corollary}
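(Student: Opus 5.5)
The surface subgroup is $\Lambda:=\varrho(f(\pi_1(D\Sigma)))$. Here $\pi_1(D\Sigma)\leqslant\pi_1(DM)$, $f:\pi_1(DM)\hookrightarrow\pi_1(\fold)$ is the injection of Lemma~\ref{lemma:graph.injection}, and $\varrho$ is the faithful representation of Theorem~\ref{main.theorem}.

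First I will check that $\pi_1(D\Sigma)\to\pi_1(DM)$ is injective. The surface $D\Sigma$ is a union of two copies of the fiber $\Sigma$ glued along the boundary circles on the cusp tori. Each of these circles is essential in its torus, and $\Sigma\to M$ is $\pi_1$-injective. So the splitting of $\pi_1(D\Sigma)$ along the puncture curves maps compatibly into the graph of groups for $\pi_1(DM)$: vertex groups go injectively to vertex groups, and edge groups go injectively to edge groups. The standard normal form argument (Britton's lemma and reduced words, as used above) then shows injectivity. Since $\varrho$ and $f$ are faithful, $\Lambda\cong\pi_1(D\Sigma)$ is a surface subgroup of $\Gamma$, of genus at least $2$ because $\Sigma$ is a punctured hyperbolic surface. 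The subgroup $\Lambda$ is infinite index, since a finite-index subgroup of a lattice in $\operatorname{SO}^+(4,1)$ cannot have cohomological dimension $2$. Thinness then reduces to Zariski density, and passing from the finite cover $X$ back to the original lattice only uses commensurability.

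For Zariski density I will apply Lemma~\ref{lemma:max.Z.closed} to $\Lambda$. Let $Z=ZCL(\Lambda)$. The subgroup $\pi_1(\Sigma)\leqslant\Lambda$ sits inside $G\leqslant\operatorname{Stab}(H_G)\cong\operatorname{SO}^+(3,1)$, and it is a normal subgroup of the lattice $\pi_1(M)$ (it is the fiber group). A nontrivial normal subgroup of a Zariski-dense subgroup of the simple group $\operatorname{SO}^+(3,1)$ is itself Zariski dense there, and $\pi_1(M)$ is Zariski dense in the stabilizer of $H_G$ by Borel density. Hence $Z$ contains this copy of $\operatorname{SO}^+(3,1)$.

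It remains to show that $Z$ preserves no hyperplane. Any hyperplane preserved by $Z$ is preserved by $\operatorname{Stab}(H_G)$. The only totally geodesic $\h^3\subset\h^4$ invariant under that full group is $H_G$ itself, since a stabilized hyperplane corresponds to an invariant spacelike line, and the only such line is the orthogonal complement of $H_G$. So I must show $Z$ does not preserve $H_G$, i.e.\ that some element of $\Lambda$ moves $H_G$. The natural candidate is the image of the other copy of $\pi_1(\Sigma)$: in $\pi_1(DM)$ it is conjugated by the stable letters $t_i$, or lies across the edge $t_0$. Under $\varrho\circ f$ its elements become words like $p_i^{j_i}\sigma p_i^{-j_i}$, and these preserve $p_i^{j_i}H_G\neq H_G$. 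I expect this to be the main obstacle, because the precise form of $f$ restricted to $D\Sigma$ must be tracked through the change of spanning tree. If $\Lambda$ contains $p\,\sigma\,p^{-1}$ with $\sigma\in\pi_1(\Sigma)$ loxodromic and $p$ parabolic with $pH_G\ne H_G$, then $Z$ contains the Zariski closure $p\,\operatorname{Stab}(H_G)\,p^{-1}=\operatorname{Stab}(pH_G)$. That group does not preserve $H_G$, and this finishes the argument. Lemma~\ref{lemma:max.Z.closed} then gives Zariski density of $\Lambda$, so $\Lambda$ is a thin surface subgroup of $\Gamma$.
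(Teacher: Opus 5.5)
Your proposal is correct and follows essentially the paper's argument: normality of the fiber group plus Borel density puts a copy of $\operatorname{SO}^+(3,1)$ stabilizing $H_G$ inside $Z$, a $p_0$-conjugate of $\varphi(\pi_1(\Sigma))$ coming from the second copy of $\Sigma$ rules out an invariant hyperplane, and Lemma~\ref{lemma:max.Z.closed} finishes. The paper first proves density for $\varrho(\pi_1(DM))$, which is not logically needed, and it simply asserts the containment you flag as the main obstacle; that containment does hold, because with $e$ as maximal tree the vertex group $\pi_1(M')$ becomes $t_0\pi_1(M)t_0^{-1}$, so basing $D\Sigma$ on a circle of $\partial\Sigma$ in $C_0$ gives $\Lambda\supseteq p_0^{j_0}\varphi(\pi_1(\Sigma))p_0^{-j_0}$, and your last step has one slip: a single element $p\sigma p^{-1}$ does not force $p\operatorname{Stab}(H_G)p^{-1}\subseteq Z$, so either use the whole conjugated subgroup (which you have) or note that a loxodromic with axis in $pH_G$ cannot preserve $H_G$, since $pH_G$ and $H_G$ meet only at the fixed point of $p$ on $\partial\h^4$.
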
 

\begin{proof}

    We first show that $\varrho(\pi_1(DM))$ is Zariski dense, and then use this to show that $\varrho(\pi_1(D\Sigma))$ is Zariski dense as well. 
    
    Let $Z$ be the Zariski closure of $\varrho(\pi_1(DM))$. $Z$ contains the Zariski closure of $G$ in SO$^+(4,1)$, which is a copy of SO$^+(3,1)$ that preserves $H_G$. We will denote this group SO$(G)$. 
    
    Now $\varrho(\pi_1(DM))$ contains $p_0 G p_0^{-1}$, so $Z$ contains the Zariski closure of this group as well. Hence, $p_0 \textnormal{SO}(G) p_0^{-1}$ is a subgroup of $Z$. $\textnormal{SO}(G)$ can only preserve a single copy of $\h^3$ in $\h^4$, specifically $H_G$. However, $p_0 \textnormal{SO}(G) p_0^{-1}$ can only preserve the hyperplane $p_0(H_G)$. Hence, $Z$ does not preserve any hyperplane in $\h^4$. Thus, by Lemma~\ref{lemma:max.Z.closed}, $Z$ is Zariski dense in SO$^+(4,1)$.

    Now the fiber group $\pi_1(\Sigma)$ is a normal subgroup of $\pi_1(M)$, so it is Zariski dense in $\textnormal{SO}^+(3,1)$. Thus, the Zariski closure of $\varrho(\pi_1(D\Sigma))$ in SO$^+(4,1)$ contains the same copies of $\textnormal{SO}^+(3,1)$ as $Z$. Hence, it also satisfies the suppositions of Lemma~\ref{lemma:max.Z.closed}, so it is Zariski dense. 
\end{proof}

More generally any discrete faithful representation of the fundamental group of the double of any finite volume, cusped, orientable hyperbolic 3-manifold in $\textnormal{SO}^+(4,1)$ is Zariski dense.

\begin{corollary} \label{3mfld.lemma}
Let $M$ be a finite volume, cusped, orientable hyperbolic 3-manifold with double $DM$. Suppose that there is a discrete faithful representation $\rho : \pi_1(DM) \hookrightarrow \textnormal{SO}^+(4,1)$.  Then the image of $\rho$ is Zariski dense.
\end{corollary}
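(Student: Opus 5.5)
We follow the argument of Corollary~\ref{main.cor}, replacing the explicit construction there with general facts about discrete faithful representations of $\pi_1(M)$. Write $\pi_1(DM)$ as the graph of groups in Figure~\ref{fig:graph.double}, with vertex groups $\pi_1(M)$ and $\pi_1(M')$. Set $G=\rho(\pi_1(M))$ and $G'=\rho(\pi_1(M'))$. Both are discrete and faithful images of the lattice $\pi_1(M)$, so each is a discrete subgroup of $\textnormal{SO}^+(4,1)$ isomorphic to a non-uniform lattice in $\textnormal{SO}^+(3,1)$.

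The first step is to show that the Zariski closure $Z$ of $\rho(\pi_1(DM))$ contains a copy of $\textnormal{SO}^+(3,1)$. Let $H$ be the Zariski closure of $G$. The image of $\pi_1(M)\to G\to H/H^\circ$ has finite index, so we may pass to a finite-index subgroup of $\pi_1(M)$ (still a lattice) whose image lies in $H^\circ$. If $H^\circ$ is solvable or is contained in a parabolic or compact subgroup, then $\pi_1(M)$ is virtually solvable or virtually abelian, which is impossible. Otherwise $H^\circ$ is a proper reductive subgroup, which must stabilize some $\h^k$ for $k\le 3$, and it contains $\pi_1$ of a finite-volume hyperbolic 3-manifold as a discrete subgroup; cohomological dimension rules out $k<3$. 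If $H^\circ=\textnormal{SO}^+(4,1)^\circ$, then $G$ is already Zariski dense and there is nothing to prove.

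The main obstacle is the remaining case, where $G$ stabilizes some copy $H_G$ of $\h^3$. In that case we use Mostow rigidity to conclude that $G$ acts as a lattice on $H_G$, so that $\textnormal{SO}(G)\cong\textnormal{SO}^+(3,1)$ and $\textnormal{SO}(G)\le Z$. To justify this step I would use that a discrete faithful representation of a finite-volume 3-manifold group into $\textnormal{Isom}(\h^3)$ which preserves the cusp subgroups as parabolic subgroups must be a lattice. To know the cusp subgroups are sent to parabolics, note that $\mathbb{Z}^2$ acting discretely on $\h^4$ cannot be loxodromic-type with faithful action inside $\h^3$ unless it is parabolic. The same argument shows that $G'$ preserves a copy $H_{G'}$ of $\h^3$ and is a lattice there.

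Finally, suppose $Z$ preserves some hyperplane $P$. Then $\textnormal{SO}(G)$ preserves $P$, so $P=H_G$, and likewise $P=H_{G'}$. Hence all of $\rho(\pi_1(DM))$ preserves $H_G$, giving a discrete faithful representation of $\pi_1(DM)$ into $\textnormal{Isom}(\h^3)$ that contains the lattice $G$. A discrete group containing a finite-covolume subgroup is itself a lattice, and $G$ has finite index in it. But $\pi_1(DM)$ contains $\pi_1(M)$ with infinite index: by Britton's Lemma, for instance, the elements $t_1$ or $g\,g'$ with $g,g'$ in distinct vertex groups outside the edge groups generate infinitely many cosets. This is a contradiction, so $Z$ preserves no hyperplane, and Lemma~\ref{lemma:max.Z.closed} shows that $Z=\textnormal{SO}^+(4,1)$.

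If $M$ has only one cusp, there are no $t_i$, so we use an amalgam word $gg'$ instead; its powers give the infinite-index claim directly.
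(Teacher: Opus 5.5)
Your argument is a valid alternative route, and its first half is more careful than the paper's.

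\textbf{Comparison with the paper.} The paper only notes that $\pi_1(DM)$ contains a subgroup isomorphic to a lattice of $\textnormal{SO}^+(3,1)$. It then takes for granted that the Zariski closure contains a copy of $\textnormal{SO}^+(3,1)$, which is the hypothesis of Lemma~\ref{lemma:max.Z.closed}. This silently skips the case in which $\rho(\pi_1(M))$ preserves no $\h^3$; your analysis of the Zariski closure $H$ of $G$ supplies exactly this missing step.

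For the second half the paper argues topologically. An invariant hyperplane would give a discrete faithful action of $\pi_1(DM)$ on $\h^3$, i.e.\ a hyperbolic structure on the closed manifold $DM$, which is impossible because of its essential tori. You replace this by an index count: a discrete group containing the lattice $G$ contains it with finite index, but $\pi_1(M)$ has infinite index in the amalgam. This is purely group-theoretic, but it needs $G$ to be a lattice in $\operatorname{Isom}(H_G)$.

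\textbf{First repair: the case $k=2$.} Cohomological dimension does not exclude $k=2$. Since $M$ is cusped, it retracts onto a $2$-complex, so $\operatorname{cd}\pi_1(M)=2$. Use instead that a torsion-free discrete subgroup of $\operatorname{Isom}(\h^2)$ is free or a surface group, hence contains no $\mathbb{Z}^2$.

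\textbf{Second repair: the lattice step.} Mostow rigidity presupposes that both groups are lattices, so it cannot show that $G$ has finite covolume on $H_G$. The fact you fall back on is true: a discrete faithful representation of a finite-volume hyperbolic $3$-manifold group into $\operatorname{Isom}(\h^3)$ is a lattice. But it is a substantial input, needing for example Bonahon's tameness theorem, or a relative compact core plus an Euler characteristic argument. Parabolicity of the cusp groups is only a small part of it.

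You can avoid this fact entirely, in two steps.
\begin{enumerate}
\item Apply your trichotomy once more inside $\operatorname{Stab}(H_G)^\circ\cong\textnormal{SO}^+(3,1)$. A proper connected algebraic subgroup there is solvable, lies in a point stabilizer, is compact, or preserves some $\h^j\subset H_G$ with $j\le 2$. All of these are excluded as before, so $H^\circ=\operatorname{Stab}(H_G)^\circ$. Hence $Z$ contains a copy of $\textnormal{SO}^+(3,1)$ with no finite-covolume statement needed.
\item Suppose $Z$ preserved a hyperplane $P$. Then $\rho(\pi_1(DM))$ would act freely and properly discontinuously on $P\cong\h^3$. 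Since $DM$ is closed and aspherical, $\operatorname{cd}\pi_1(DM)=3$, which forces $P/\rho(\pi_1(DM))$ to be a closed hyperbolic $3$-manifold. Its fundamental group cannot contain the $\mathbb{Z}^2$ cusp subgroups, a contradiction.
\end{enumerate}
The second step is the precise form of the paper's ``cannot be hyperbolized'' argument, and it needs no lattice property of $G$.
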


\begin{proof}

    $DM$ contains a hyperbolic $3$-manifold so any faithful representation of $\pi_1(DM)$ has some subgroup that is isomorphic to a lattice in some copy of SO$^+(3,1)$. Thus, it suffices to show that $ZCL(\rho(\pi_1(DM)))$ does not preserve any hyperplane in $\h^4$. 
    
    $DM$ cannot be hyperbolized because it is a compact 3-manifold with an essential 2-torus, so there is no discrete faithful representation of $\rho(\pi_1(DM))$ in any copy of $\textnormal{SO}^+(3,1)$. Hence, $\rho(\pi_1(DM))$ does not preserve any copy of $\h^3$ in $h^4$, so it is Zariski dense. 
\end{proof}

In fact, by Lemma~\ref{3mfld.lemma} because $M$ is virtually fibered, any discrete faithful representation of $\pi_1(DM)$ yields a surface group that is Zariski dense in $\textnormal{SO}^+(4,1)$.

\section{Extension to Higher Dimensions}
\label{ch:higher-dim}
Next, we generalize Corollary~\ref{main.cor} to higher dimensions. For the remainder of this section, let $X$ be a cusped, arithmetic, hyperbolic $n$-orbifold with fundamental group $\Gamma$. Notice that the entire proof of Theorem~\ref{main.theorem} can be done independent of the dimension of $X$, so the generalization follows this same strategy when $n>4$. In order to ensure thinness of the resulting surface subgroups in higher dimensions, we will slightly modify the representations we get as a result. We begin by building a chain of nested submanifolds in a finite cover of $X$ to start the construction.

\begin{lemma} \label{matryoshka} 
    Let $X$ be a cusped arithmetic hyperbolic $n$-orbifold, where $n\geq4$. Then $X$ has some finite cover $X''$ with fundamental group $\Gamma''$ such that $X''$ contains a chain of cusped, arithmetic, hyperbolic, immersed  submanifolds $M_3 \subset M_4 \subset \ldots \subset M_{n-1} \subset X''$, where $M_i$ has dimension $i$. Additionally, $\Gamma''$ contains a chain of nonuniform, arithmetic subgroups $G_3 < G_4 < G_5 < \ldots < G_{n-1} < \Gamma''$ such that $G_i$ is isomorphic to $\pi_1(M_i)$. 
    
    Furthermore, we may require that each manifold or orbifold in this chain has torus cusps, the ends of any given submanifold do not collide at infinity in $X''$, and that $M_3$ is fibered.
\end{lemma}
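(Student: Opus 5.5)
We will work with quadratic forms, as in Lemma~\ref{lemma:groups.and.qf}. Up to commensurability and conjugation, $\Gamma$ is commensurable with $\operatorname{SO}^+(q,\mathbb{Z})$ for some integral diagonal form $q=\langle a_0,\ldots,a_n\rangle$ of signature $(n,1)$. Since $\Gamma$ is non-uniform, $q$ is isotropic over $\mathbb{Q}$.

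\textbf{Step 1: a nested chain of isotropic subforms.} We first replace $q$ by a rationally equivalent diagonal form $q=\langle -1,1,1,b_3,\ldots,b_n\rangle$, or more generally one whose first four coefficients form an isotropic form of signature $(3,1)$. To do this, pick a rational isotropic vector $v$ and a hyperbolic plane $\langle v,w\rangle$ containing it, which is equivalent to $\langle 1,-1\rangle$. The orthogonal complement is positive definite of dimension $n-1\geq 3$, and by Hasse--Minkowski it represents $1$ over $\mathbb{Q}$ once its dimension is at least $4$, or after the usual local check in small dimension. Diagonalize it, then clear denominators by rescaling basis vectors. A rational change of basis changes $\operatorname{SO}^+(q,\mathbb{Z})$ only up to commensurability and conjugacy, so this is harmless.

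Set $f_k$ to be the leading $(k+1)$-variable subform. Each $f_k$ for $k\geq 3$ then has signature $(k,1)$ and contains the isotropic vector $(1,1,0,\ldots,0)$. The upper-left-corner embeddings $g\mapsto g\oplus I$ give a chain
\[
\operatorname{SO}^+(f_3,\mathbb{Z})<\operatorname{SO}^+(f_4,\mathbb{Z})<\cdots<\operatorname{SO}^+(f_{n-1},\mathbb{Z})<\operatorname{SO}^+(q,\mathbb{Z}),
\]
and each term is a non-uniform arithmetic lattice in the corresponding $\operatorname{SO}^+(k,1)$. These groups stabilize a nested chain of totally geodesic subspaces $\mathbb{H}^3\subset\cdots\subset\mathbb{H}^{n-1}\subset\mathbb{H}^n$. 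They therefore give immersed totally geodesic suborbifolds of the corresponding cover of $X$.

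\textbf{Step 2: passing to a good finite cover.} We then pass to a deep congruence subgroup $\Gamma''=\Gamma(N)\cap\Gamma$ with $N\geq 3$ and set $G_i=\Gamma''\cap\operatorname{SO}^+(f_i,\mathbb{Z})$. These intersections are principal congruence subgroups of the smaller lattices, so they are torsion-free and have unipotent cusp groups. This makes every $M_i=\mathbb{H}^i/G_i$ and $X''$ a manifold with torus cusps. Here we use the standard fact that the cusp groups of a deep enough principal congruence subgroup are lattices in the unipotent radical.

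Next we arrange that each $M_i$ embeds and that its cusps do not collide at infinity. For this we appeal to the separability argument of \cite{MRS}, already used in the $4$-dimensional setting, applied inductively up the chain. Geometrically finite subgroups (here lattices in totally geodesic subspaces and their cusp groups) are separable in these arithmetic groups by \cite{BHW}. This lets us pass to a finite-index subgroup in which the finitely many bad double cosets $G_i\gamma G_i$, those producing self-intersections or two cusps of $M_i$ entering the same cusp of $X''$, are excluded. Since the chain is finite, a single common finite-index subgroup handles all the levels at once; we intersect the corresponding subgroups of $G_i$ accordingly.

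\textbf{Step 3: fibering.} Finally, $M_3$ is a cusped hyperbolic $3$-manifold, so by \cite{Agol} it virtually fibers. We pass to a fibered finite cover of $M_3$ and extend it to a finite cover of $X''$ by separability of $G_3$. The ambient groups are replaced by finite-index subgroups whose intersection with $\operatorname{SO}^+(f_3)$ is the fibered group. Passing to finite covers preserves all the earlier properties: torus cusps, embeddedness and non-colliding cusps.

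The main obstacle is Step 2. The non-colliding-cusps condition has to hold simultaneously at every level of the chain, and $G_i$ has to remain exactly the intersection with the smaller lattice after each passage to finite index. We would handle this by doing all the separations with respect to the single group $\Gamma''$, working from the top of the chain down. At each stage the chosen subgroup should contain the already-fixed $G_{i-1}$, which is possible because separability lets one choose subgroups containing a given geometrically finite subgroup.
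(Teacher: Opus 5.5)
Your proposal is correct in outline. It reaches the nested chain by a genuinely different and more uniform route than the paper.

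\textbf{The paper's route.} It inducts on $n$. The base case $n=4$ rests on Lemma~\ref{lemma:groups.and.qf}, which uses Montesinos's representatives, the three-square theorem, Hasse--Minkowski invariants and Dirichlet's theorem to find an isotropic $(3,1)$ subform. The inductive step deletes one positive coefficient and gets isotropy of the remaining $n$-variable form from Meyer's theorem. It then passes to covers via \cite{MRS} and applies the inductive hypothesis to the lifted codimension-one suborbifold.

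\textbf{Your route.} You use only that a $\mathbb{Q}$-isotropic form splits off a hyperbolic plane, $q\cong_{\mathbb{Q}}\langle -1,1\rangle\perp q'$ with $q'$ positive definite. After diagonalizing $q'$ and clearing denominators, every leading subform in $k+1\geq 4$ variables has signature $(k,1)$ and contains the isotropic vector $(1,1,0,\ldots,0)$. This gives the whole chain at once for every $n\geq 4$, and it makes the case analysis of Lemma~\ref{lemma:groups.and.qf} unnecessary.

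Defining every $G_i$ as $\Gamma''\cap\operatorname{SO}^+(f_i,\mathbb{Z})$ inside one ambient group makes the subgroup chain automatic. Your explicit use of separability \cite{BHW} to promote a fibered cover of $M_3$ to a cover of $X''$ supplies a step the paper's induction leaves implicit: there too, covers of the submanifold produced by the inductive hypothesis must be extended to covers of $X''$. Getting torus cusps from neatness of principal congruence subgroups of level $N\geq 3$ is a standard substitute for the citation the paper uses.

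\textbf{Points to tighten.}
\begin{enumerate}
\item The aside that $q'$ represents $1$ is not needed and can fail when $n=4$, since a positive definite ternary form need not represent $1$. Only the hyperbolic plane is used.
\item Describing collisions as finitely many bad double cosets is inaccurate. Suppose two cusps of $M_i$ have cross-sections that are disjoint parallel subtori of one cusp torus of $X''$. That collision comes from infinitely many $(G_i,G_i)$-double cosets, so separating finitely many elements from $G_i$ does not remove it. You are really using \cite{MRS} as a black box, exactly as the paper does.
\item You assert that non-collision survives the later passages to finite index, namely the fibered cover and the intersection of subgroups chosen for different levels. This is not automatic: in a finite cover, one cusp of $M_i$ can lift to several cusps of a single component that all lie in the same cusp of the cover. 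The paper makes the same tacit assumption, so this is not a defect relative to its proof. A complete write-up would still need to justify the order of the covering steps.
\end{enumerate}
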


\begin{proof}
We prove this lemma via induction on $n$. The base case $n=4$, was handled in the proof of Theorem~\ref{main.theorem}. Now suppose that $n > 4$. By the Tits classification, $\Gamma$ has some conjugate that is commensurable with $\textnormal{SO}^+(q,\mathbb{Z})$ where $q$ is a diagonal integral quadratic form with signature $(n, 1)$. By simply removing one of the positive coefficients of $q$, we get a diagonal subform $f_{n-1}$ with signature $(n-1,1)$. Because $n\geq 5$, Meyer's theorem tells us that $f_{n-1}$ is isotropic \cite{Serre}. 
Note that $\textnormal{SO}^+(f_{n-1},\mathbb{Z}) < \textnormal{SO}^+(q,\mathbb{Z})$ via the upper-left corner representation. Hence, $\textnormal{SO}^+(f_{n-1},\mathbb{Z})\setminus\h^{n-1}$ can be immersed in $\textnormal{SO}^+(q,\mathbb{Z})\setminus \h^n$.

Because $X$ is abstractly commensurable to $\textnormal{SO}^+(q,\mathbb{Z})\setminus \h^n$, there is a finite cover $X'$ of $\textnormal{SO}^+(q,\mathbb{Z})\setminus \h^n$ that covers $X$ as well. The fundamental group of this cover is a finite index subgroup of $\textnormal{SO}^+(q,\mathbb{Z})$. Now $\textnormal{SO}^+(f_{n-1},\mathbb{Z})\setminus\h^{n-1}$ lifts to an immersed suborbifold in this cover, with fundamental group a finite index subgroup of $\textnormal{SO}^+(f_{n-1},\mathbb{Z})$. Additionally, we can pass to another finite cover of $X'$ to ensure that the immersed suborbifold lifts to an embedded submanifold where the ends do not collide at infinity (see \cite{MRS}). Finally, there is a third finite cover such that the cover and the lift of the submanifold both have torus cusps \cite{MRS}. Let $X''$ be this final finite cover of X. By the inductive hypothesis, the lift of $\textnormal{SO}^+(f_{n-1},\mathbb{Z})\setminus\h^{n-1}$ in $X''$ contains a chain of manifolds of the desired form. Hence, $X''$ contains a chain of submanifolds of the desired form. Similarly, by the inductive hypothesis, the finite index subgroup of $\textnormal{SO}^+(f_{n-1},\mathbb{Z})$ that makes up the fundamental group of this submanifold contains a chain of subgroups of the desired form. 
\end{proof}

This construction gives us a cusped surface group in $\Gamma$, specifically a representation of the fiber group of $M_3$. Now, as before we next need to carefully embed the folded double of $M_3$ into $X$ to build a Zariski dense surface group in $\Gamma$. 

\begin{figure}[!ht] 
\centering
\begin{tikzpicture}[]
\tikzstyle{every node}=[font=\small]

            \node[color=black] (a) at (.9,.44) {$FM_3$};

            \draw[rounded corners=25pt, color = black](5.2,-2.05)--(3.15,-.75)--(1.5,-1.5)--(0,0) -- (1.5,1.5)--(3.15,.75)--(4.2,1.4);

            \draw[rounded corners=10pt, color=black](5.4,-1.86)--(3.6,-.6)--(3.6,.6)--(4.3,1.2);

            \draw[color = red, rounded corners=8pt](5.32,-1.9)--(3.3,-.5)--(3.3,.5)--(4.25,1.25);
            \draw[color = red, rounded corners=10pt](5.22,-1.95)--(3.4,-.8)--(1.5,-.7)--(1.1,0) -- (1.5,.7)--(3.15,.5)--(4.2,1.3);


            \draw[rounded corners=30pt, color=blue](5.32,-2.2)--(3.55,-1.1)--(1.5,-2)--(-.5,0) -- (1.5,2)--(3.3,.8)--(4.25,1.475);

            \draw[rounded corners=10pt, color=blue](5.71,-1.9)--(4.1,-.6)--(3.9,.6)--(4.7,1.2);


            \node[color=blue] (a) at (2.8,1.5) {$M_4$};


            \draw[rounded corners=33pt, color = {rgb,255:red,60; green,180; blue,84}](5.32,-2.2)--(3.55,-1.4)--(1.5,-2.4)--(-.9,0) -- (1.5,2.4)--(3.55,1.4)--(5.5,2.7);

            \draw[rounded corners=10pt, color = {rgb,255:red,60; green,180; blue,84}](5.71,-1.9)--(4.3,-.4)--(4.3,.4)--(5.8,1.85);

            \draw[color = {rgb,255:red,60; green,180; blue,84}] (1.5,0.02) arc (175:315:.5cm and 0.25cm);
            \draw[color = {rgb,255:red,60; green,180; blue,84}] (2.25,-0.21) arc (-30:180:0.35cm and 0.15cm);


            \node[color = {rgb,255:red,60; green,180; blue,84}] (a) at (0,1.5) {$X$};


            \draw[color = blue](4.7,1.2)--(5.2,1.6);

            \draw[color = blue](4.25,1.475)--(5.1,2.1);

            \draw[gray, opacity = 0.5] (4.3,1.215) arc (0:60:0.2cm);
            \draw[blue] (4.725,1.23) arc (30:80:0.2cm);
            \draw[blue, opacity = 0.5] (4.59,1.32) arc (210:260:0.2cm);
            \draw[rounded corners=3pt] (4.29,1.2)--(4.59,1.45)--(4.59,1.25)--(4.2,1.4);
            \draw[color = red, rounded corners=3pt, opacity=.3](4.58,1.54)--(4.73,1.33)--(4.6,1.19)--(4.2,1.3);
            \draw[black] (4.49,1.35) ellipse (0.3 and 0.2);
            \draw[color = red] (4.225,1.23)--(4.58,1.54);

            \draw[color = {rgb,255:red,60; green,180; blue,84}](5.71,-1.9)--(6,-2.2);

            \draw[color = {rgb,255:red,60; green,180; blue,84}](5.32,-2.2)--(5.8,-2.5);

            \draw[gray, opacity = 0.5] (5.39,-1.85) arc (-5:-82:0.22cm);
            \draw[color = red, rounded corners=3 pt, opacity=.3](5.48,-2.25)--(5.63,-2.15)--(5.75,-2)--(5.32,-1.9);
            \draw[color = {rgb,255:red,60; green,180; blue,84}, opacity = 0.3] (5.6,-2.01) arc (-110:0:0.1cm);
            \draw[color = {rgb,255:red,60; green,180; blue,84}] (5.725,-1.93) arc (70:180:0.09cm);
            \draw[color = black, rounded corners=3pt] (5.4,-1.86)--(5.50,-2.15)--(5.65,-1.975)--(5.2,-2.05);
            \draw[color=black] (5.5,-2.05) ellipse (0.295 and 0.195);
            \draw[color = red](5.22,-1.95)--(5.48,-2.25); 

            \node (a) at (1.3,.05) {};
            \node (sigma) [color = red, below of=a, node distance=25pt] {$F\Sigma$ };

\end{tikzpicture}
\caption{Diagram of $F\Sigma \subset FM_3 \subset FM_4$ nested in $X''$. The pictured meridians of the torus cusp cross-sections of $X''$ and $M_4$ lift to horocycles in $\h_n$ that will be preserved by carefully chosen parabolic elements in $\Gamma''$}
\label{fig:matryoshka}
\end{figure}

\begin{theorem}
\label{higher.dim}
Let $\Gamma$ be the fundamental group of a cusped, arithmetic, hyperbolic $n$-orbifold. $\Gamma$ has a thin surface subgroup.
\end{theorem}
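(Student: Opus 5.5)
We reduce to a finite cover and build the surface group inductively up the chain given by Lemma~\ref{matryoshka}. Since thinness passes to commensurable groups, we may replace $\Gamma$ by $\Gamma''$. This gives nested nonuniform arithmetic subgroups $G_3<G_4<\cdots<G_{n-1}<\Gamma''$ with $M_3$ fibered, torus cusps, and non-colliding ends. Let $H_i\cong\h^i$ denote the totally geodesic subspace preserved by $G_i$, so $H_3\subset H_4\subset\cdots\subset H_{n-1}\subset\h^n$.

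The first step is the construction from Theorem~\ref{main.theorem}, now carried out inside $G_4$. As the paper notes, that proof is dimension independent. For each cusp subgroup $P_i$ of $G_3$, choose a parabolic $p_i\in G_4$ commuting with $P_i$ whose fixed point is that of $P_i$ and which moves $H_3$ off itself inside $H_4$. After passing to high powers $p_i^{j_i}$ (chosen with respect to a horoball family $\mathcal{B}$ in $\h^n$ whose horoballs are at distance at least $D$ apart), the ping-pong/broken-geodesic argument shows that $\varrho:\pi_1(FM_3)\to G_4$, $t_i\mapsto p_i^{j_i}$, is faithful. By Lemma~\ref{lemma:graph.injection}, $\pi_1(D\Sigma)$ then embeds in $\Gamma''$. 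By the argument of Corollary~\ref{main.cor}, its Zariski closure is the whole copy of $\textnormal{SO}^+(4,1)$ preserving $H_4$. This is a surface subgroup, but it is not yet Zariski dense in $\textnormal{SO}^+(n,1)$.

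The key step, and the main obstacle, is to tilt the representation out of $H_4$ so that the Zariski closure climbs to $\textnormal{SO}^+(n,1)$. I would use the torus cusps: the cusp subgroup of $\Gamma''$ at the fixed point $y_0$ of $P_0$ is free abelian of rank $n-1$ and contains $P_0$. First choose parabolics in this rank-$(n-1)$ group, commuting with $P_0$, in directions transverse to $H_4$. Then redefine the images of the stable letters $t_0,\ldots,t_{n-1}$ as products $p_i^{j_i}\,q_i^{k_i}$, where the $q_i$ range over these transverse directions. With several cusps available (or, if there is only one cusp, after passing to a cover with enough cusps via \cite{MRS}), the translated copies $\varrho(t_i)H_3$ can be made to span all of $\h^n$. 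The faithfulness argument goes through unchanged, because it used only three facts: $\varrho(t_i)$ is parabolic, it commutes with $P_i$, and for large powers it moves the horospheres $s_i\cap H_3$ distance at least $D$. The delicate points are (a) ensuring the commuting parabolic really moves $H_3$ off itself, meaning it is not in the rank-two subgroup $P_i$ itself (in fact one needs it to be not in the rational span of $P_i$), and (b) arranging the angle and length conditions uniformly; both work exactly as in the dimension-4 case.

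Finally, Zariski density follows by induction on dimension using Lemma~\ref{lemma:max.Z.closed}. Let $Z$ be the Zariski closure of $\varrho(\pi_1(D\Sigma))$. Since $\pi_1(\Sigma)$ is normal in $\pi_1(M_3)$, $Z$ contains the copy $\textnormal{SO}(H_3)\cong\textnormal{SO}^+(3,1)$ and its conjugates $\varrho(t_i)\textnormal{SO}(H_3)\varrho(t_i)^{-1}$. These subgroups preserve distinct $3$-planes meeting only at ideal points. Suppose by induction that $Z$ contains $\textnormal{SO}^+(k,1)$ preserving some $H'\cong\h^k$ with $k<n$. The stable letters are chosen transversally, so some conjugate subgroup in $Z$ preserves a $3$-plane not contained in $H'$. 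Hence the closure of the group generated by these two pieces preserves no $k$-plane, and applying Lemma~\ref{lemma:max.Z.closed} within the smallest totally geodesic subspace containing both gives $\textnormal{SO}^+(k+1,1)\subset Z$. Iterating up to $k=n$ shows $Z=\textnormal{SO}^+(n,1)$. Discreteness is automatic since $\varrho(\pi_1(D\Sigma))\le\Gamma''$, and the subgroup has infinite index, so it is a thin surface subgroup of $\Gamma''$, and hence of $\Gamma$.
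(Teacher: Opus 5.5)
Your strategy is the paper's. You pass to the cover of Lemma~\ref{matryoshka}, send each stable letter of $\pi_1(FM_3)$ to a parabolic in the cusp group of $\Gamma''$ at $y_i$ that moves $H_3$ off itself, reuse the dimension-free ping-pong for faithfulness, and climb to $\textnormal{SO}^+(n,1)$ with Lemma~\ref{lemma:max.Z.closed}.

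The difference is in how the transverse parabolics are chosen. The paper uses the whole chain $G_3<\cdots<G_{n-1}$: the $i$-th stable letter goes to $p_{(i+2,i)}$, so $\varrho(t_i)H_3$ lies in $H_{i+4}$ but not in $H_{i+3}$. The groups $S_k$ then climb the flag $H_3\subset H_4\subset\cdots$ one dimension at a time, which is exactly the codimension-one situation the lemma handles. Your version needs only $G_3<\Gamma''$. The detour through $G_4$ and the products $p_i^{j_i}q_i^{k_i}$ are unnecessary, since any element of the cusp lattice at $y_i$ outside the rational span of $P_i$ works for faithfulness.

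However, your claim that the planes $\varrho(t_i)H_3$ ``can be made to span'' $\h^n$ carries all the weight and is left unargued. It is true. Let $U\subset\mathbb{R}^{n,1}$ be the $(3,1)$-subspace defining $H_3$, and suppose $\varrho(t_i)$ translates by $v$ at $y_i\in\partial H_3$. Then $H_3$ and $\varrho(t_i)H_3$ together span the $4$-plane corresponding to $U\oplus\mathbb{R}v^{\perp}$, where $v^{\perp}$ is the component of $v$ in $U^{\perp}\cong\mathbb{R}^{n-3}$. This space does not depend on $i$. Each cusp lattice has full rank, so its components in $U^{\perp}$ span $U^{\perp}$, and with at least $n-3$ cusps you can pick the $v_i^{\perp}$ linearly independent. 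The same computation shows that, because your $H'$ always contains $H_3$, adjoining $\varrho(t_i)H_3$ raises the dimension by exactly one. That is what Lemma~\ref{lemma:max.Z.closed} needs in ``the smallest totally geodesic subspace containing both.''

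Two smaller repairs are needed.

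First, the $q_i$ must be taken in the cusp group at $y_i$, not at $y_0$, or $\varrho(t_i)$ will not commute with $P_i$. Also, there is one stable letter per cusp of $M_3$, not $n$.

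Second, the $t_i$ are not in $\pi_1(D\Sigma)$, so normality of $\pi_1(\Sigma)$ in $\pi_1(M_3)$ only gives $\textnormal{SO}(H_3)\subset Z$. Run the spanning argument for $\varrho(\pi_1(DM_3))$, which does contain each $\varrho(t_i)\varrho(\pi_1(M_3))\varrho(t_i)^{-1}$. Then pass to $D\Sigma$: it is a fiber of $DM_3$, so $\pi_1(D\Sigma)\trianglelefteq\pi_1(DM_3)$. The Zariski closure of its image is therefore an infinite normal algebraic subgroup of $\textnormal{SO}(n,1)$, and it contains the identity component because $\textnormal{SO}(n,1)$ is almost simple. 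The paper is equally brief at this last step.
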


\begin{proof}
We begin this proof by replacing $X$ with a suitable finite cover satisfying the conclusions of Lemma~\ref{matryoshka} so that $X$ contains a chain of nested submanifolds $M_3\subset\ldots\subset M_{n-1}\subset X$ with the claimed properties. Additionally, by passing to further finite sheeted covers, we may ensure that $M_3$ has arbitrarily many cusps, all of which do not collide at infinity in $M_k$ for $k\geq 4$ (see Proposition~3.1 in \cite{MRS}). We choose appropriate covers such that $M_3$ has $N$ cusps for $N > n-3$. Because the ends of $M_3$ do not collide at infinity in each of the $M_i$, this ensures that each $M_i$, hence $X$, has at least $N$ ends. We order the cusps of $M_3$ as: $C_0, C_1, \ldots C_{n-4}, \ldots C_{N-1}$.

Next, we will build a representation $\varrho:\pi_1(\fold_3)\hookrightarrow\Gamma$ by capping off the $2$-torus cusps with $3$-tori as before. We will do this by sending the stable letters to appropriately chosen parabolic elements in each maximal cusp subgroup of $\Gamma$, with the images of the stable letters $t_0,\ldots,t_{n-2}$ chosen carefully to ensure Zariski density of the image.

Let $P_{M_3,i}$ be a maximal cusp subgroup of $\pi_1(M_3)$ associated to the $i^{\textrm{th}}$ cusp. $M_3$ has 2-torus cusp cross-sections, so $P_{M_3,i} = \langle p_{(0,i)}, p_{(1,i)} \rangle$, where $p_{(0,i)}$ and $p_{(1,i)}$ are commuting parabolic elements of $\Gamma$. The cusp cross-sections of $M_3$ are essential submanifolds of the 3-torus cusp cross-sections of $M_4$. Hence, there is some parabolic element $p_{(2,i)}$ that commutes with $P_{M_3,i}$ such that $\langle p_{(0,i)}, p_{(1,i)}, p_{(2,i)} \rangle$ is a maximal cusp subgroup of $M_4$ associated to the cusp of $M_4$ that contains the $i^{th}$ cusp of $M_3$, which we call $P_{M_4,i}$. Inductively, because the $(k-1)$-torus cusp cross-sections of $M_k$ are essential submanifolds of the $k$-torus cross-sections of the cusps of $M_{k+1}$, we can denote $P_{M_{k+1},i} = \langle P_{M_k,i}, p_{(k-1,i)} \rangle$ by the maximal cusp subgroup of $M_{k+1}$ containing the cusp $P_{M_k,i}$. We can rewrite this as $P_{M_{k},i} = \langle p_{(0,i)}, p_{(1,i)}, \ldots p_{(k-2,i)} \rangle$. This gives us a chain of maximal cusp subgroups $  P_{M_3,i} < P_{M_4,i} < \ldots < P_{M_{n-1},i}$, for at least $N$ cusps of each of the submanifolds in our chain. In this vein we can let $ P_{X,i} = \langle p_{(0,i)}, p_{(1,i)}, \ldots p_{(n-2,i)} \rangle$. 

Now we can define our representation. Let $\varphi: \pi_1(M_3) \to G_3$ be the isomorphism we get from Lemma~\ref{matryoshka}. We can define $\varrho(\fold_3)$:
\begin{equation}
\varrho(x) = \begin{cases} 
    \varphi(x) & x \in \pi_1(M_3) \\
    p_{(i+2,i)} & x = t_i, \hspace{6pt} 0 \leq i \leq n-3\\
    p_{(2,i)} & x = t_i, \hspace{6pt} i \geq n-2\\
    
   \end{cases}
\end{equation}
Now, we note that the argument in the proof of Theorem~\ref{main.theorem} does not rely on the dimension of $X$, so long as it is at least $4$. Thus, we may now repeat the proof of Theorem~\ref{main.theorem} verbatim and conclude that, after passing to large enough powers of the $p_{(j,i)}$, $\varrho(\pi_1(\fold_3))$ is faithful. All that remains is to verify that the image of $\pi_1(D\Sigma)$ is Zariski-dense in $\textnormal{SO}(q,\mathbb{R}) \cong \textnormal{SO}^+(n,1)$, where $\Sigma$ is the fiber surface in $M_3$. 

We first show that $\pi_1(DM_3)$ is Zariski dense in $\textnormal{SO}(q,\mathbb{R})$ by considering its action on $\h^n$. Broadly, this will follow from showing that the set of parabolic elements $\{p_{(i+2,i)}\,|\,0\leq i\leq n-4\}$ translates a copy of $\h^3$ ``in every direction'' in $\h^n$. To start, we identify particular subspaces of $\h^n$ fixed by the submanifolds of $X$ coming from Lemma~\ref{matryoshka}. Starting from the manifold of least codimension, $G_{n-1}$ fixes exactly one copy of $\h^{n-1}$ inside $\h^n$, which we denote by $H_{n-1}$. We denote by $\textnormal{SO}^+(G_{n-1})\cong\textnormal{SO}^+(n-1,1)$ the stabilizer of $H_{n-1}$ in $\textnormal{SO}^+(n,1)$. Moving down one dimension, note that $G_{n-2}$ can preserve multiple copies of $\h^{n-2}$ inside $\h^n$, but as it is itself a subgroup of $G_{n-1}$, it preserves exactly one copy of $\h^{n-2}$ inside $H_{n-1}$. We similarly denote this copy by $H_{n-2}$ and its stabilizer inside $\textnormal{SO}^+(G_{n-1})$ by $\operatorname{SO}^+(G_{n-2})$. Continuing in this manner, we get a chain of hypersurfaces $H_3\subset H_4\subset\ldots\subset H_{n-1}\subset\h^n$ such that each $H_i$ has stabilizer $\textnormal{SO}^+(G_i)$ and is preserved by $G_i$.

Now, we proceed inductively and show that the group
\[
S_k = \langle G_3,p_{(2,0)}G_3p_{(2,0)}^{-1},\ldots,p_{(k-2,k-4)}G_3p_{(k-2,k-4)}^{-1}\rangle
\]
for $4\leq k\leq n-1$ is Zariski dense in $\textnormal{SO}^+(G_k)$. For the base case, we set $S_3:= G_3$ and note that, as this is the fundamental group of $M_3$, this group is Zariski dense in $\textnormal{SO}^+(G_3)$. We also illustrate the case for $S_4$ for concreteness. The parabolic element $p_{(2,0)}\in G_4$ is chosen so that it does not preserve $H_3$, hence, just as in the proof of Lemma~\ref{main.cor}, we have that $S_4=\langle G_3,p_{(2,0)}G_3 p_{(2,0)}^{-1}\rangle$ is Zariski dense in $\textnormal{SO}^+(G_4)$.

Now, we proceed with the inductive step. Suppose that $S_k$ is Zariski dense in $\textnormal{SO}^+(G_k)$. By construction, $G_k$ preserves $H_k$ but no other copy of $\h^k$ inside $H_{k+1}$. Therefore, by the inductive hypothesis, $S_k$ also preserves $H_k$ but no other copy of $\h^k$ inside $H_{k+1}$. However, the element $p_{(k-1,k-3)}$ does not preserve $H_k$ by choice, so $S_{k+1}$ cannot preserve $H_k$. Therefore, $S_{k+1}$ does not preserve any copy of $\h^k$ inside $H_{k+1}$. Since, by the inductive hypothesis, the Zariski closure of $S_{k+1}$ also contains a copy of $\textnormal{SO}^+(k,1)$, by Lemma~\ref{lemma:max.Z.closed}, it is Zariski dense in $\textnormal{SO}^+(G_{k+1})$. Therefore, $S_n<\varrho(\pi_1(DM_3))$ is Zariski dense in $\textnormal{SO}^+(n,1)$.

Finally, as in the proof of Corollary~\ref{main.cor}, the fiber group $\pi_1(\Sigma)$ is Zariski dense in $\textnormal{SO}^+(3,1)$ and, by a similar argument as above, $\varrho(\pi_1(D\Sigma))$ does not preserve any hyperplane in $\h^n$ and contains a copy of $\textnormal{SO}^+(n-1,1)$ and hence, again by Lemma~\ref{lemma:max.Z.closed}, is Zariski dense in $\textnormal{SO}^+(n,1)$.
\end{proof}

\section{GFERF Subgroups}
\label{ch:GFERF}
We say that a group of isometries of $\h^n$ is \emph{GFERF}, or \emph{geometrically finite locally extended residually finite} if it is subgroup separable on all its geometrically finite subgroups. This is a slightly weaker condition than being separable on all finitely generated subgroups, or being LERF\@. In this section, we prove Theorem~\ref{12}, restated here.

\begin{theorem}
Let $M$ be a cusped, arithmetic, hyperbolic $n$-orbifold and let $DM$ be the double of $M$ over its cusps. Then $\pi_1(DM)$ embeds discretely into $\operatorname{SO}^+(n+1,1)$ and its image is GFERF.
\end{theorem}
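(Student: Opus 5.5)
We propose to embed $\pi_1(DM)$ into $\pi_1(FM)$ as in Lemma~\ref{lemma:graph.injection}. We then realize $\pi_1(FM)$ inside a non-uniform arithmetic lattice of simplest type in $\operatorname{SO}^+(n+1,1)$, using the faithful representation $\varrho$ of Theorem~\ref{main.theorem}. The GFERF property will come from \cite{BHW}.

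First, conjugating and passing to a finite cover (which does not affect GFERF for $\pi_1(DM)$, up to the standard finite-index arguments), we will assume $\pi_1(M)$ is commensurable with $\operatorname{SO}^+(f,\mathbb{Z})$ for a diagonal integral form $f$ of signature $(n,1)$. Set $q=f\oplus\langle 1\rangle$, of signature $(n+1,1)$. Then $\operatorname{SO}^+(f,\mathbb{Z})$ sits in $\operatorname{SO}^+(q,\mathbb{Z})$ via the upper left corner representation. Let $\Gamma$ be a suitable torsion-free finite-index subgroup of $\operatorname{SO}^+(q,\mathbb{Z})$, chosen so that, as in \cite{MRS}, $M$ embeds totally geodesically in $X=\h^{n+1}/\Gamma$, its cusps do not collide at infinity, and all cusps are tori. (Handling the orbifold case of $M$ will need a little care: $DM$ for an orbifold should virtually be the double of a manifold cover, and GFERF passes to finite-index supergroups.) We then define $\varrho:\pi_1(FM)\to\Gamma$ by sending each stable letter $t_i$ to a large power of a parabolic $p_i$ completing the cusp group of $C_i$. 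The proof of Theorem~\ref{main.theorem} is dimension-independent, so $\varrho$ is faithful, and hence so is its restriction to $\pi_1(DM)$.

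Next, we will show the image $\Lambda=\varrho(\pi_1(DM))$ is geometrically finite. The broken-geodesic argument of Theorem~\ref{main.theorem} in fact shows that orbit maps are quasi-isometric embeddings relative to the peripheral structure. So we plan to prove geometric finiteness directly from the combination theorem (Maskit's second combination theorem, applied to the amalgam of $\pi_1(M)$ and $p_0\pi_1(M)p_0^{-1}$ along the cusp group $P_0$, followed by HNN steps for $t_1,\dots$). Concretely, we will exhibit a fundamental domain bounded by the hyperplanes $H_G$ and $p_i^{k}H_G$ and by horoballs, for sufficiently large powers. The ping-pong sets are the half-spaces cut out by the hyperplanes through the cusp points separating $H_G$ from $p_iH_G$. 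Geometric finiteness of $\pi_1(FM)$'s image follows the same way.

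Finally, \cite{BHW} shows that $\Gamma$, being arithmetic of simplest type, is GFERF. We then pass GFERF to $\Lambda$: a geometrically finite subgroup $K\le\Lambda$ is also geometrically finite in $\Gamma$ (compositions of geometrically finite inclusions are geometrically finite in this setting, since $\Lambda$ is itself geometrically finite and discrete in $\operatorname{SO}^+(n+1,1)$). Hence $K$ is separable in $\Gamma$, and by intersecting finite-index subgroups of $\Gamma$ with $\Lambda$, it is separable in $\Lambda$. The step we expect to be the main obstacle is the geometric finiteness of $\Lambda$ and of its geometrically finite subgroups viewed in $\Gamma$. In particular, we must check that the parabolic subgroups of $\Lambda$, the rank $(n-1)$ groups $\langle P_i,p_i^{k}\rangle\cap\Lambda$, are handled correctly: they arise as $P_i\ast_{}$-type combinations sharing a cusp point, and we must verify that no new accidental parabolics appear. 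We would attack this using the horoball-separation choice of $\mathcal{B}$ from the proof of Theorem~\ref{main.theorem}.
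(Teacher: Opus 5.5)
Your overall route is the paper's. You embed $\pi_1(DM)$ in $\pi_1(FM)$ via Lemma~\ref{lemma:graph.injection}, and map $\pi_1(FM)$ faithfully into an arithmetic lattice $\Gamma$ of simplest type in $\operatorname{SO}^+(n+1,1)$ built from $q=f\oplus\langle 1\rangle$, using the dimension-free version of Theorem~\ref{main.theorem}. You then invoke Bergeron--Haglund--Wise for $\Gamma$ and restrict to the subgroup.

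The one real divergence is the step you single out as the main obstacle, and it is not needed. Geometric finiteness of a discrete subgroup $K\le\operatorname{SO}^+(n+1,1)$ depends only on $K$ and its action on $\mathbb{H}^{n+1}$ (limit set, convex core, fundamental domain), not on any group containing it. There is no separate notion of $K$ being geometrically finite ``in $\Lambda$'' as opposed to ``in $\Gamma$''. So every geometrically finite $K\le\Lambda$ is a geometrically finite subgroup of $\Gamma$ whether or not $\Lambda$ itself is geometrically finite. Hence $K$ is separable in $\Gamma$, and intersecting with $\Lambda$ gives separability in $\Lambda$. This is exactly the paper's observation that subgroups of GFERF groups are GFERF. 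The Maskit combination argument and the accidental-parabolic analysis can therefore be dropped for the theorem as stated.

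That extra work would buy the intrinsic version. To identify geometrically finite subgroups of $\Lambda$ with relatively quasiconvex subgroups of $\pi_1(DM)$ (Hruska), and so get relative QCERF, one genuinely needs two things. First, $\Lambda$ must be geometrically finite. Second, its maximal parabolic subgroups must be exactly the images of the cusp groups. The paper's closing corollary uses both implicitly.

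There is one soft spot your sketch shares with the paper: the return from the cover $M'$ to $M$. ``GFERF passes to finite-index overgroups'' is true for discrete subgroups of $\operatorname{SO}^+(n+1,1)$. However, it presupposes a discrete faithful representation of $\pi_1(DM)$ itself. A discrete embedding of the finite-index subgroup $\pi_1(DM')$ does not produce one, and the paper's ``commensurability preserves GFERF'' glosses over the same point.

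So the claim that $\pi_1(DM)$ embeds discretely needs either a construction carried out on $M$ directly, or must be replaced by the intrinsic relative-QCERF statement. That statement does pass between an abstract group and its finite-index overgroups. This is precisely where your geometric-finiteness step becomes necessary rather than superfluous.
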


\begin{proof} 
Subgroup separability is a property that can be passed easily to subgroups. That is to say that if we have groups $G,H,K$ such that $H < K < G$, if $G$ is $H$-subgroup separable then $K$ is $H$-subgroup separable. This fact follows from the definition of subgroup separability [See \cite{ALR} for details]. Additionally, if $G,H$, and $K$ act by isometries on $\h^n$, the choice of supergroup that contains $H$ has no effect on its fundamental domain in $\h^n$. Hence, if $H$ is geometrically finite as a subgroup of $K$ then it is geometrically finite as a subgroup of $G$. Therefore, subgroups of GFERF groups are themselves GFERF.

The fundamental groups of cusped, arithmetic, hyperbolic, orbifolds are GFERF \cite{BHW}. Thus, if there is a faithful representation of $\pi_1(DM)$ in $\pi_1(X)$, where $X$ is a cusped arithmetic hyperbolic $(n+1)$-orbifold then $\pi_1(DM)$ is GFERF\@. Therefore, the fundamental groups of the doubles of the fibered 3-manifolds that we considered in Section~\ref{ch:folded.double} are GFERF\@. We now extend this result to the doubles of all cusped arithmetic hyperbolic 3-manifolds and orbifolds, as well as higher dimensional manifolds and orbifolds.

We proceed by using the folded double, as in Section~\ref{ch:folded.double}. Though we initially defined $FM$ for finite-volume, cusped, hyperbolic 3-manifolds, the construction pictured in Figure~\ref{fig:graph.fold} and the proof pictured in Figure~\ref{fig:graph.isomorphisms} are both independent of the dimension of $M$. So any faithful representation of $\pi_1(FM)$ in some $\pi_1(X)$ restricts to a faithful representation of $\pi_1(DM)$ for $M$ any finite-volume, cusped, hyperbolic $n$-manifold. 

In order to construct this representation, we first find some $X$ such that a finite sheeted cover $M'$ of $M$ has a $\pi_1$-injective immersion or embedding into $X$. When $n > 3$ we do this by noting that some conjugate of $\pi_1(M)$ is commensurable to $\textnormal{SO}^+(f, \mathbb{Z})$ for some form $f$ of signature $(n,1)$. Then we can let $M' = \textnormal{SO}^+(f, \mathbb{Z}) \setminus \h^n$ and $X = \textnormal{SO}^+(q, \mathbb{Z}) \setminus \h^n$, where $q = \langle f \rangle \oplus \langle 1 \rangle$, which allows us to use the upper left corner function as in the proof of Lemma~\ref{lemma:groups.and.qf} to give us a $\pi_1$-injective immersion of $M'$ into $X$. 
When $n = 3$, we can realize $\pi_1(M)$ as a Bianchi group, thus we can pass to a finite sheeted cover of $M$ that is a manifold that embeds as a totally geodesic submanifold of a cusped orientable arithmetic hyperbolic 4-manifold [See \cite{KRS} corollary 1.5]. This embedding is $\pi_1$-injective.

Now we can pass to finite covers of $M'$ and $X$ such that $M'$ is embedded in $X$, $M'$ has $(n-1)$-torus cusps, $X$ has $n$-torus cusps, and the cusps of $M'$ do not collide at infinity. This allows us to recreate the function $\varrho:\pi_1(FM') \hookrightarrow \pi_1(X) $ from the proof of Theorem~$\ref{higher.dim}$. In fact, the proof of Theorem~$\ref{higher.dim}$ needs only slight changes to generalize to higher dimensions. If we replace $\h^4$ with $\h^{n+1}$, $\h^3$ with $\h^n$, 3-manifolds with $n$-manifolds, and 2-manifolds with $(n-1)$-manifolds then this proof still holds. Thus, there is a faithful representation of $\pi_1(FM')$ in $\pi_1(X)$, and $M$ has some finite cover $M'$ such that $\varrho(\pi_1(DM'))$ is GFERF. 

Now, we would like to show that $\pi_1(DM)$ itself has GFERF image. Note that the covering map from $M'$ to $M$ induces a finite degree covering map from $DM'$ to $DM$. This cover is commensurable with the base space. Commensurability preserves GFERF, so $\pi_1(DM)$ has GFERF image. 
\end{proof}

A more intrinsic statement of this result, independent of the embedding of $\pi_1(DM)$ can also be given. First, let $\mathcal{P}$ denote the collection of cusp subgroups of $\pi_1(DM)$. Then $\pi_1(DM)$ is hyperbolic relative to the family $\mathcal{P}$. A subgroup $H\leqslant \pi_1(DM)$ is geometrically finite (under the above embedding into $\operatorname{SO}^+(n+1,1)$) if and only if $H$ is relatively quasiconvex in $\pi_1(DM)$, relative to $\mathcal{P}$ (c.f. \cite{hruska}). Recall that a (relatively) hyperbolic group is said to be (relatively) QCERF, or \textit{(relatively) quasi-convex extended residually finite}, if it is subgroup separable on its (relatively) quasiconvex subgroups. Therefore, by the above methods, we also arrive at the following.

\begin{corollary}
    $\pi_1(DM)$, as above, is QCERF relative to $\mathcal{P}$.
\end{corollary}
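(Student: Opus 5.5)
The plan is to derive the corollary from Theorem~\ref{12} by translating geometric finiteness into relative quasiconvexity, and then transporting separability through the embedding. Let $\iota:\pi_1(DM)\hookrightarrow \Gamma_X=\pi_1(X)$ be the faithful representation built in the proof of Theorem~\ref{12}. Here $X$ is a cusped arithmetic hyperbolic $(n+1)$-orbifold, and we first pass to the finite covers $M'$ and $DM'$ used there. The image $\iota(\pi_1(DM'))$ is a discrete subgroup of $\operatorname{SO}^+(n+1,1)$. It is geometrically finite: it is the fundamental group of a finite graph of geometrically finite groups amalgamated along maximal parabolic subgroups, and the ping-pong/quasi-geodesic argument from the proof of Theorem~\ref{main.theorem} exhibits an equivariant quasi-isometric embedding of the coned-off Bass--Serre tree into $\h^{n+1}$ relative to horoballs.

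Under $\iota$, the cusp subgroups $\mathcal{P}$ of $\pi_1(DM)$ map exactly onto maximal parabolic subgroups of the image. Each torus cusp of $DM$ is realized by the corresponding cusp subgroup of $M$, and that subgroup sits inside a cusp group of $X$. By Hruska's characterization (cited as \cite{hruska}), a subgroup $H\leqslant\pi_1(DM)$ is relatively quasiconvex with respect to $\mathcal{P}$ if and only if $\iota(H)$ is geometrically finite in $\operatorname{SO}^+(n+1,1)$. This relies on the image being geometrically finite with peripheral structure equal to the image of $\mathcal{P}$, which is exactly what we arranged above.

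Now take $H$ relatively quasiconvex and $g\notin H$. Then $\iota(H)$ is a geometrically finite subgroup of $\Gamma_X$, since geometric finiteness is intrinsic to the action on $\h^{n+1}$, as noted in the proof of Theorem~\ref{12}. By \cite{BHW}, $\Gamma_X$ is GFERF, so there is a finite-index $K\leqslant\Gamma_X$ with $\iota(H)\leqslant K$ and $\iota(g)\notin K$. Pulling back, $\iota^{-1}(K)$ is finite index in $\pi_1(DM')$, contains $H$, and misses $g$. This shows $\pi_1(DM')$ is relatively QCERF. To pass from $DM'$ to $DM$, we use that $\pi_1(DM')$ has finite index in $\pi_1(DM)$, that relative quasiconvexity is preserved under intersection with finite-index subgroups, and that separability passes up to finite-index supergroups via the standard coset argument.

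The main obstacle is the geometric finiteness of $\iota(\pi_1(DM))$ together with the identification of its maximal parabolic subgroups with $\mathcal{P}$. I would get these from the horoball/quasi-geodesic estimates in the proof of Theorem~\ref{main.theorem}. Those estimates give that orbit maps are quasi-isometric embeddings once the horoballs in $\mathcal{B}$ are coned off. Using the normal form, they also show that no extra parabolic elements arise beyond conjugates of cusp subgroups.
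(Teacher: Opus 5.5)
Your proposal is correct and takes the paper's route: Hruska's equivalence between relative quasiconvexity in $\pi_1(DM)$ and geometric finiteness of the image, combined with GFERF of the ambient arithmetic lattice from \cite{BHW}, with separability pulled back to $\pi_1(DM')$ and then passed up to $\pi_1(DM)$ by the finite-index coset argument. The paper gives this only as a brief remark and takes for granted the point you single out, namely that the image of $\pi_1(DM')$ is geometrically finite with maximal parabolic subgroups exactly the images of $\mathcal{P}$; your proposal only sketches this via the ping-pong quasi-geodesics, but that is already more justification than the paper provides.
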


\bibliography{references.bib}

\appendix
\section{Computations of Hasse-Minkowski Invariants}
\label{append:hasse}

In this appendix we compute the Hasse-Minkowski invariants of all diagonal quadratic forms with signature $(4,1)$ and coefficients in $\mathbb{Z}$. The definition of Hasse-Minkowski invariants relies on the Hilbert Symbol: 

\begin{definition}
    The \emph{Hilbert Symbol} over $\mathbb{Q}_p$ of two non-zero elements $a$ and $b$ is a function $(a,b)_p$ that characterizes the quadratic form $q = \langle a,b,-1 \rangle$. It is defined:

\[
(a,b)_p=
\begin{cases}
    1 & q \textrm{\ is isotropic}\\
    -1 & q  \textrm{\ is anisotropic}
\end{cases}
\]

\end{definition}

\begin{definition}
    Let $q = \langle a_0, a_1, a_2, a_3, a_4 \rangle$ be a non-degenerate, diagonal  quadratic over the local field $\mathbb{Q}_p$. The \emph{Hasse-Minkowski invariant} $c_p(q)$ is the product of the Hilbert symbols of the coefficients. It can be computed with the formula \[c_p(q)= \prod_{i<j} (a_i,a_j)_p\]
\end{definition}

Note that as the name suggests Hasse-Minkowski invariants are invariant within projective equivalence classes of quadratic forms, so we only need to compute these values for representative elements of each equivalence class. Hence, we can restrict to the family of forms from \cite{Montesinos} used in Section~\ref{groups.and.qf}.

For the computations in this section, we find it convenient to use the following formula for Hilbert symbols \cite{Serre}. Let $a = p^\alpha*u$, $b = p^\beta*v$, and let $\epsilon(u) \equiv  \frac{1}{2}(u-1) \mod 2$ and $\omega(u) \equiv \frac{1}{8}(u^2-1) \mod 2$. Then

\begin{equation} \label{hilbert}
(a,b)_p = \begin{cases} 
    (-1)^{\epsilon(u)\epsilon(v) +\alpha \omega(v) + \beta \omega(u)} & p=2 \\
    (-1)^{\alpha \beta \epsilon(p)}\cdot \legendre{u}^\beta \cdot \legendre{v}^\alpha & p \neq 2\\
   \end{cases}
\end{equation}
where $\legendre[\cdot]{\cdot}$ is the Legendre symbol. We will also use the following properties \cite{Serre}: 
\begin{enumerate}
    \item $(a,1)_p = 1$
    \item $(a,-a)_p = 1$
    \item $(a,b)_p = (b,a)_p$
    \item $(ab,c)_p = (a,c)_p(b,c)_p$.
\end{enumerate}

By definition, if $S \equiv 1 \mod 4$ then $c_p(q) = (-1,aS)_p(aS,a)_p(-1,a)_p $. From the properties listed above, we get $c_p(q) = (S,-a)_p(-1,a)_p$. Similarly, when $S \equiv -1 \mod 4$ we know that $c_p(q) = (aS,-a)_p = (S,-a)_p$. Immediately this gives us the following Hasse-Minkowski invariants:

\[c_p(q)= \begin{cases} 
      (-1)^{\tfrac{-a-1}{2}\cdot\tfrac{S-1}{2}} & p = 2 \\
      (S,-a)_p & p = a, \hspace{4pt} a \equiv 1 \mod 4 \\
      (-1)\cdot(S,-a)_p & p = a, \hspace{4pt} a \equiv -1 \mod 4 \\
      (S,-a)_p & p | S \\
      1 & else \\
   \end{cases}
\] 

Then the only value left to compute is $(S,-a)_p$. Applying Equation~\ref{hilbert} we see that when $p$ divides $S$, this is \legendre{-a}, which is $-1$ by assumption. When $p=a$, we have $(S,-a)_a = \legendre[a]{S}$. We wish to turn this into a product of $\legendre[p_i]{-a}$ for $p_i$ dividing $S$, so we will use the following properties of the Legendre symbol \cite{Gauss}:

\begin{enumerate}
    \item Definition: $\legendre{-1} = (-1)^{\tfrac{p-1}{2}}$
    \item Multiplicativity: $\legendre{a}\legendre{b} = \legendre{ab}$
    \item Quadratic reciprocity: $\legendre[a]{b}\legendre[b_{ }]{a}=(-1)^{{\tfrac {a-1}{2}}\cdot {\tfrac {b-1}{2}}}$ for $a$ and $b$ distinct odd primes
\end{enumerate}

Beginning with quadratic reciprocity, we see that:

\[
\begin{array}{*{3}{rcl}l}
    \legendre[a_{ }]{p_i}\legendre[p_i]{a}\legendre[p_i]{-1} & = &  (-1)^{{\tfrac {a-1}{2}}\cdot {\tfrac {b-1}{2}}}\legendre[p_i]{-1}\\
    \legendre[a_{ }]{p_i}\legendre[p_i]{-a} & = &  (-1)^{{\tfrac {a-1}{2}}\cdot {\tfrac {p_i-1}{2}}}(-1)^{\frac{p_i-1}{2}}\\
    \legendre[a_{ }]{p_i} & = &  (-1)^{{\tfrac {a-1}{2}}\cdot {\tfrac {p_i-1}{2}}+\tfrac{p_i-1}{2}+1}\\
    \legendre[a_{ }]{S} & = &  \prod_{p_i | S}(-1)^{{\tfrac {a-1}{2}}\cdot {\tfrac {p_i-1}{2}}+\tfrac{p_i-1}{2}+1}\\
    \legendre[a_{ }]{S} & = &  (-1)^{\sum_{p_i | S}({\tfrac {a-1}{2}}\cdot {\tfrac {p_i-1}{2}}+\tfrac{p_i-1}{2}+1)}\\
\end{array}
\]

 We want to compute the parity of the sum above. Thus, our computation relies on $a \mod 4$ and $p_i \mod 4$. We can use our formulation of $S$ to determine these values. Note that when $p_i \equiv 1 \mod 4$ the term ${{\tfrac {a-1}{2}}\cdot {\tfrac {p_i-1}{2}}+\tfrac{p_i-1}{2}+1}$ is odd. Similarly, when $p_i \equiv -1 \equiv a \mod 4$ this term is odd. However, when $p_i \equiv -1 \equiv -a \mod 4$ it is even.  Now, let $S_0$ be the set of $p_i$ such that $p_i \equiv 1 \mod 4$ and $S_1$ be the set of $p_i$ such that $p_i \equiv -1 \mod 4$. The parity of the sum is exactly the parity of $\abs{S_0} + \abs{S_1}\cdot\frac{a-1}{2} $. 
 
 If $S \equiv 1 \mod 4$, then $\abs{S_1}$ is even. So the parity of the sum in question is exactly the parity of $\abs{S_0}$. Furthermore, by our initial assumptions,  $a \equiv (-1)^{\abs{S_0}+\abs{S_1}} \equiv (-1)^{\abs{S_0}} \mod4$, so it is also the parity of $\frac{a-1}{2}$. Hence, we have that $\legendre[a_{ }]{S} = (-1)^{\frac{a-1}{2}}$. 

Conversely, if $S \equiv -1\mod 4$, then $\abs{S_1}$ is odd. In this case by our assumptions on $a$ we have that $a \equiv (-1)^{\abs{S_0}+\abs{S_1}+1} \equiv (-1)^{\abs{S_0}} \mod4$. Hence, ${\frac{a-1}{2}}\cdot \abs{S_1}$ is even if and only if $\abs{S_0}$ is even. Then the sum of these two terms is always even, and $\legendre[a_{ }]{S} = 1$. This gives us the Hasse-Minkowski invariants in terms of $S \mod 4$ and $a\mod 4$:

\[c_p(q)= \begin{cases} 
      1 & p = 2, \hspace{4pt} S \equiv 1 \mod 4 \\
      -1 & p = 2, \hspace{4pt} S \equiv -1 \mod 4, \hspace{4pt} a \equiv 1 \mod 4 \\
      1 & p = 2, \hspace{4pt} S \equiv -1 \mod 4, \hspace{4pt} a \equiv -1 \mod 4 \\
      1 & p = a, \hspace{4pt} S \equiv 1 \mod 4, \hspace{4pt} a \equiv 1 \mod 4 \\
      -1 & p = a, \hspace{4pt} S \equiv 1 \mod 4, \hspace{4pt} a \equiv -1 \mod 4 \\
      1 & p = a, \hspace{4pt} S \equiv -1 \mod 4 \\
      -1 & p | S \\
      1 & else \\
   \end{cases}
\]

\end{document}